\theoremstyle{plain}
\newtheorem{theorem}{Theorem}
\newtheorem{corollary}[theorem]{Corollary}
\newtheorem{definition}[theorem]{Definition}
\newtheorem{lemma}[theorem]{Lemma}
\newtheorem{proposition}[theorem]{Proposition}
\newtheorem{theoremA}{Theorem}
\newtheorem{corollaryA}[theoremA]{Corollary}
\renewcommand*{\thecorollaryA}
\theoremstyle{definition}
\newtheorem{remark}[theorem]{Remark}
\numberwithin{equation}{section}
\numberwithin{theorem}{section}
\newcommand{\vol}{\mathrm{vol}}
\newcommand{\Hess}{\operatorname{Hess}}
\newcommand{\rr}{\mathbb{R}}
\newcommand{\nn}{\mathbb{N}}
\newcommand{\hh}{\mathbb{H}}
\newcommand{\sect}{\operatorname{Sect}}
\newcommand{\ric}{\operatorname{Ric}}
\newcommand{\supp}{\operatorname{supp}}
\def\XXint#1#2#3{{\setbox0=\hbox{$#1{#2#3}{\int}$}
     \vcenter{\hbox{$#2#3$}}\kern-.5\wd0}}
\newcommand{\inj}{\mathrm{inj}}
\newcommand{\e}{\epsilon}
\newcommand{\vp}{\varphi}
\newcommand{\R}{\mathscr{R}}
\newcommand\cE{\mathcal{E}}
\newcommand\cH{\mathcal{H}}   
\newcommand\cI{\mathcal{I}}
\newcommand\cL{\mathcal{L}}
\renewcommand\mod[1]{\vert{#1}\vert}
\newcommand\bigmod[1]{\bigl\vert{#1}\bigr|}
\newcommand\norm[2]{{\Vert{#1}\Vert_{#2}}}
\newcommand\bignorm[2]{\left.{\bigl\Vert{#1}\bigr\Vert_{#2}}\right.}
\newcommand\bignormto[3]{\left.{\bigl\Vert{#1}\bigr\Vert_{#2}^{#3}}\right.}
\newcommand\bigopnorm[2]{\big|\!\big|\!\big| {#1} \big|\!\big|\!\big|_{#2}}
\newcommand\Bigopnormto[3]{\Bigl|\!\Bigl|\!\Bigl| {#1}\Bigr|\!\Bigr|\!\Bigr|_{#2}^{#3}}
\newcommand\be{\beta}
\newcommand\la{\lambda}   
\newcommand\si{\sigma}
\newcommand\funnyk{k\hbox to 0pt{\hss\phantom{g}}}
\newcommand\lp[1]{L^p(#1)}
\newcommand\laq[1]{L^q(#1)}
\newcommand\ld[1]{L^2(#1)}
\newcommand\lr[1]{L^r(#1)}
\newcommand\sob[3]{W^{{#1},{#2}}(#3)}
\newcommand\wh{\widehat}
\newcommand\whH{\widehat{\phantom{G}}\hbox to 0pt{\hss $H$}}
\newcommand\emspace{\hbox to 6pt{\hss}}
\newcommand\ds{\displaystyle}
\newcommand\Ric{\mathop{\rm Ric}}
\newcommand\Dom{\mathrm{Dom}}
\newcommand\quant{\advance\quantno by1
                      \ifnum\quantno=1\qquad\else\quad\fi\forall }
\newcommand\BR{\mathbb{R}} 
\newcommand\BH{\mathbb{H}}
\begin{document}


\title[Gradient and Calder\'on--Zygmund inequalities]{$L^{p}$ gradient estimates and Calder\'on--Zygmund inequalities under Ricci lower bounds}
\author{Ludovico Marini}
\address[L. Marini]{Dipartimento di Matematica e Applicazioni,
Università degli Studi di Milano-Bicocca, Via R. Cozzi 55, I-20125, Milano}
\email[Corresponding author]{l.marini9@campus.unimib.it}

\author {Stefano Meda}
\address[S. Meda]{Dipartimento di Matematica ed Applicazioni, Università degli Studi di Milano-Bicocca, Via R. Cozzi 55, I-20125, Milano}
\email{stefano.meda@unimib.it}

\author{Stefano Pigola}
\address[S. Pigola]{Dipartimento di Matematica e Applicazioni,
Università degli Studi di Milano-Bicocca, Via R. Cozzi 55, I-20125, Milano}
\email{stefano.pigola@unimib.it}

\author{Giona Veronelli}
\address[G. Veronelli]{Dipartimento di Matematica ed Applicazioni, Università degli Studi di Milano-Bicocca, Via R. Cozzi 55, I-20125, Milano}
\email{giona.veronelli@unimib.it}

\keywords{$L^p$ gradient estimates, Calder\'on--Zygmund inequalities, Riesz transform, integral Ricci bounds, harmonic coordinates}
\subjclass[2020]{Primary: 58J05; Secondary: 53C21, 35B45, 42B20}

\begin{abstract}
In this paper we investigate the validity of first and second order $L^{p}$ estimates for the solutions of the Poisson equation 
depending on the geometry of the underlying manifold.
We first present $L^{p}$ estimates of the gradient under the assumption that the Ricci tensor is lower bounded in a local 
integral sense and construct the first counterexample showing that they are false, in general, without curvature restrictions.
Next, we obtain $L^p$ estimates for the second order Riesz transform (or, equivalently, the validity of $L^{p}$ Calder\'on--Zygmund inequalities) on the whole scale $1<p<+\infty$ by assuming that the injectivity radius 
is positive and that the Ricci tensor is either pointwise lower bounded or non-negative in a global integral sense. 
When $1<p \leq 2$, analogous $L^p$ bounds on even higher order Riesz transforms are obtained provided that also the derivatives 
of Ricci are controlled up to a suitable order. In the same range of values of $p$, for manifolds with lower Ricci 
bounds and positive bottom of the spectrum, we show that the $L^{p}$ norm of the Laplacian controls the whole 
$W^{2,p}$-norm on compactly supported functions. 
\end{abstract}

\date{\today}
\maketitle

\section{Introduction}
The purpose of this paper is to prove some regularity results 
(see Section~\ref{s: Main results} for the precise statements) concerning solutions to the Poisson equation on 
Riemannian manifolds under comparatively weak assumptions on their geometry.  We also show that certain regularity results may be 
strongly influenced by the geometry at infinity of the manifold.
One recurrent theme in our investigation is to prove (at least some of) our results under the assumption that
the Ricci curvature satisfies appropriate $L^p$ lower bounds in place of the pointwise bounds that commonly appear in the literature. 

\smallskip
In order to place our research in perspective, we begin by making 
some comments that may help the reader orienting in this fascinating field of research.  

\smallskip
Given a function $f$ in $\lp{\BR^n}$, where $1<p<\infty$, and a distributional solution $u$ of the Poisson equation $\Delta u = f$, it is 
well known that $\partial_j\partial_\ell u$ belongs to $\lp{\BR^n}$ for every pair of integers $j$ and $\ell$ in $\{1,\ldots,n\}$ and
\begin{equation} \label{f: Poisson BRn}
\bignorm{\partial_j\partial_\ell u}{p}
\leq C \bignorm{f}{p},
\end{equation}
where $C$ does not depend on $f$.  
This regularity result may be reformulated as a boundedness result in $\lp{\BR^n}$ for the so called
{second order Riesz transform}, as follows.  
For $j$ and $\ell$ as above, consider the operator $\R_{j,\ell}$, defined, at least formally, by 
$$
\big(\R_{j,\ell} f\big)\wh{\phantom a} (\xi)
= \frac{\xi_j\, \xi_\ell}{\mod{\xi}^2}\, \wh f(\xi);
$$
$\R_{j,\ell}$ is a paradigmatic example of Calder\'on--Zygmund singular integral operator, and acts on~$f$ by convolution with a specific
principal value distribution, viz. the inverse Fourier transform of the function $\xi\mapsto \xi_j\, \xi_\ell/\mod{\xi}^2$.  
Such operators are known to be bounded on $\lp{\BR^n}$, $1<p<\infty$, and of weak type $(1,1)$ \cite{Ho}.  
By virtue of the very special structure of the Euclidean space, this is equivalent to saying that 
the operator $\nabla^2(-\Delta)^{-1}$, where $\nabla^2$ denotes the second covariant derivative associated to the Euclidean metric, 
extends to a bounded operator from $\lp{\BR^n}$ to $\lp{\BR^n; T_2\BR^n}$, the space of all $L^p$ sections of the
second order covariant tensors on $\BR^n$, endowed with the standard metric.  The operator $\nabla^2(-\Delta)^{-1}$ 
will henceforth be called \textit{second order Riesz transform}, and denoted by $\R^2$.  More generally, for each positive integer $k$, one
can consider the $k^{\mathrm{th}}$ order Riesz transform $\nabla^k(-\Delta)^{-k/2}$, denoted by $\R^k$, which is
bounded from $\lp{\BR^n}$ to $\lp{\BR^n; T_2\BR^n}$, $1<p<\infty$, and of weak type $(1,1)$.  

\smallskip
The Riesz potential $\Delta^{-1}$ is unbounded on $\lp{\BR^n}$, so that one cannot expect that a distributional solution of \eqref{f: Poisson BRn}
with $L^p$ datum $f$ belongs to $\lp{\BR^n}$.  A simple scaling argument shows that both the estimates   
$$
\bignorm{u}{p}
\leq C \bignorm{f}{p} 
\qquad\hbox{and}\qquad
\bignorm{\mod{\nabla u}}{p}
\leq C \bignorm{f}{p} 
$$
fail.  However, $\Delta^{-1}$ is a smoothing operator.  Indeed, if $n\geq 3$, then 
the Hardy--Littlewood--Sobolev inequality implies that $\Delta^{-1}$ maps $\lp{\BR^n}$ to $\lr{\BR^n}$,
where $1/r=1/p-2/n$.  Thus, distributional solutions $u$ of the Poisson equation \eqref{f: Poisson BRn} belong to $\lr{\BR^n}$, hence locally
to $\lp{\BR^n}$.  This, in turn, implies that $u$ is locally (but not globally) in the Sobolev space $\sob{2}{p}{\BR^n}$.   

\smallskip
Recall that $-\Delta$ generates a Markovian semigroup, so that its $L^p$ spectrum is contained in 
the closure of the right half plane.  In particular, for every $\tau>0$ the operator $\tau\cI-\Delta$ is invertible in $\lp{\BR^n}$, $1<p<\infty$,
equivalently
\begin{equation} \label{f: a priori modified Poisson BRn}
\norm{u}{p} 
\leq C \, \norm{\tau\, u-\Delta u}{p}
\end{equation}
whenever the right hand side is finite.  In other words, solutions to the modified Poisson equation $\Delta u - \tau \, u = f$,
with datum $f$ in $\lp{\BR^n}$, are in $\lp{\BR^n}$.  It is convenient to introduce the $k^{\textrm{th}}$ order
\textit{local Riesz transform} $\R_\tau^k := \nabla^k (\tau \cI-\Delta)^{-k/2}$.  Then the estimate \eqref{f: Poisson BRn}
may be reformulated by saying that $\R_\tau^2$ is bounded from $\lp{\BR^n}$ to $\lp{\BR^n; T_2\BR^n}$.   
Furthermore, observe that the $L^p$ boundedness of the first order Riesz transform $\nabla(-\Delta)^{-1/2}$, and the Moment inequality 
\cite[Proposition~6.6.4]{Haa} (which we can apply, for $-\Delta$ is a sectorial operator on $\lp{\BR^n}$),   imply the gradient estimate
\begin{equation} \label{f: gradient estimate BRn}
\bignorm{\mod{\nabla u}}{p}
	\leq C \bignorm{(-\Delta)^{1/2} u}{p}
	\leq C \bignormto{u}{p}{1/2} \bignormto{\Delta u}{p}{1/2}
	\leq C \big(\bignorm{u}{p}+ \bignorm{\Delta u}{p}\big).  
\end{equation}
This and \eqref{f: Poisson BRn} then yield the bound   
\begin{equation} \label{f: modified Poisson BRn}
\bignorm{u}{\sob{2}{p}{\BR^n}}
\leq C \big(\bignorm{u}{p}+ \bignorm{\Delta u}{p}\big).  
\end{equation}
It is natural to speculate how the scenario described above has to be modified as
we progressively move away from the familiar Euclidean space, by replacing $\BR^n$ with a
complete noncompact $n$ dimensional Riemannian manifold $M$, and the Laplace operator by the Laplace--Beltrami
operator, which we henceforth denote by $\Delta$.  
Clearly, the definitions of Riesz transform and local Riesz transform of order $k$ extend in an obvious way to this more general setting.
They will be denoted by $\R_\tau^k$ and $\R^k$, respectively.

\smallskip
Simple examples that illustrate how subtle the influence of the geometry at infinity of $M$ on the estimates discussed above can be
are the hyperbolic space $\BH^n$ and the connected sum $\BR^n\sharp \BR^n$ of two copies of $\BR^n$.  It is worth observing
that both $\BH^n$ and $\BR^n\sharp\BR^n$ have bounded geometry in the strongest possible sense.  

\smallskip
Since the bottom of the $L^2$ spectrum of $\Delta$ is strictly negative and its $L^1$ spectrum
is contained in the left half plane, $\Delta$ is invertible on $\lp{\BH^n}$, $1<p<\infty$, so that 
a distributional solution $u$ of the Poisson equation $\Delta u = f$, with $f$ in $\lp{\BH^n}$, automatically belongs to $\lp{\BH^n}$.  
Since the first order Riesz transform is bounded from $\lp{\BH^n}$ to $\lp{\BH^n; T_1\BH^n}$ \cite{Str1983,A1}, we can argue as in 
\eqref{f: gradient estimate BRn}, and conclude that  
\begin{equation} \label{f: Sobolev BHn}
\bignorm{u}{\sob{2}{p}{\BH^n}}
\leq C \bignorm{f}{p},
\end{equation}
an estimate which has no analogue in $\BR^n$.   

Coulhon and Duong \cite{CD1999} proved that the first order Riesz transform $\R^1$ is unbounded on $\lp{\BR^n\sharp \BR^n}$
for $p>n$.  In fact, they considered the case $n\geq 3$, but their argument can be adapted to the case where $n=2$.  Thus,
in particular, $\R^1$ is unbounded from $\lp{\BR^2\sharp\BR^2}$ to $\lp{\BR^2\sharp\BR^2; T_1 (\BR^2\sharp\BR^2)}$ for all $p>2$, a fact alien to $\BR^n$. For an interesting generalization to manifolds with finitely many Euclidean ends see \cite{CCH}.

\smallskip
Suppose now that $(M,g)$ is an $n$ dimensional Riemannian manifold and that $1<p<\infty$, and consider the problem
of determining (geometric) assumptions under which the analogues of \eqref{f: Poisson BRn}, \eqref{f: a priori modified Poisson BRn},
\eqref{f: gradient estimate BRn}, \eqref{f: modified Poisson BRn} and \eqref{f: Sobolev BHn} hold on $M$.  
It may be worth warning the reader that people in Harmonic Analysis and in Global Analysis quite often use different terminologies
to denote the same object:  in particular, the
former speak about the $L^p$ boundedness of local Riesz transforms,
whereas the latter prefer to refer to the $L^p$ Calder\'on--Zygmund inequalities
\begin{equation} \label{f: CZ M}
	\bignorm{\mod{\nabla^2 u}}{p}
	\leq C \, \big[\bignorm{u}{p}+ \bignorm{\Delta u}{p}\big]  
	\quant u \in C_c^\infty(M).   
\end{equation} 
An account of this latter approach can be found in the survey paper \cite{P2020}. The equivalence between the $L^p$ boundedness of the second order Riesz transform and the validity of an $L^p$ Calder\'on--Zygmund inequality will be formalised in Proposition \ref{prop:equiv}. The two formulations will be used interchangeably in the rest of the paper.  

\smallskip
First we look at \eqref{f: gradient estimate BRn}.  A special case of a celebrated result of D.~Bakry \cite{B} states that if the Ricci curvature
curvature of $M$ is bounded from below, then the first order local Riesz transform is bounded on $\lp{M}$ for every $p$ in $(1,\infty)$, 
equivalently there exists a constant $C$ such that 
\begin{equation} \label{f: Bakry}
\bignorm{\mod{\nabla u}}{p}
\leq C \, \big[\bignorm{(-\Delta)^{1/2} u}{p} + \bignorm{u}{p}\big]
\quant u \in C_c^\infty(M).   
\end{equation}
Thus, much as in \eqref{f: gradient estimate BRn}, we obtain the gradient estimate  
\begin{equation} \tag{GE$(p)$}\label{f: gradient estimate M}
\bignorm{\mod{\nabla u}}{p}
\leq C \, \big[\bignorm{u}{p}+ \bignorm{\Delta u}{p}\big]  
\quant u \in C_c^\infty(M).   
\end{equation}
In the case where $p>2$, this result was also obtained via probabilistic arguments by Cheng, Thalmaier and Thompson \cite{CTT}.
To the best of our knowledge it is not known whether the first order local Riesz transform is bounded from $\lp{M}$ to $\lp{M, TM}$, $1<p<2$, on any complete 
Riemannian manifold $M$.  However, Coulhon and Duong \cite{CD2003} proved that if $p \in (1, 2]$, 
then the $L^p$ gradient estimates \eqref{f: gradient estimate M} holds on any geodesically complete manifold.  A much simpler proof
thereof may be found in \cite[Lemma 1.6]{HMRV2021}.  
We emphasise that the the multiplicative estimate
$$
\bignorm{\mod{\nabla u}}{p}
	\leq C \bignormto{u}{p}{1/2} \bignormto{\Delta u}{p}{1/2}
\quant f \in C_c^\infty(M)
$$
fails if $p>2$ and $M = \BR^2\sharp \BR^2$ \cite[second remark after Theorem~4.1]{CD2003}, although $M$ has Ricci curvature bounded from
below, whence Bakry's estimate \eqref{f: Bakry} and the Moment inequality \cite[Proposition~6.6.4]{Haa} imply that \eqref{f: gradient estimate M} 
holds for every $p$ in $(1,\infty)$.  This result illustrates how sensitive of the geometry of the underlying manifold
these inequalities may be. 

\smallskip
It is natural to speculate whether the gradient estimates \eqref{f: gradient estimate M} hold for some $p>2$ under the sole assumption that 
$M$ is geodesically complete.  One of our main contributions (see Theorem~B in Section~\ref{s: Main results}) is to exhibit 
for each $p>2$ and each positive integer $n\geq 2$ an $n$ dimensional Riemannian manifold $M$ 
that does not support the gradient estimate \eqref{f: gradient estimate M}.  
According to what has been discussed above, the curvature of these manifolds is necessarily lower unbounded.
However, as we will explain in Remark \ref{rmk:optimality}, it is possible to construct examples where the negative part of the curvature grows as slowly as desired.  

Note that, as a consequence, both $\R^1$, and $\R_\tau^1$ for any $\tau>0$, are unbounded on $\lp{M}$.  

\medskip

We also prove that if $p_0>n$, and the Ricci curvature is bounded from below in an appropriate local $L^{p_0/2}$ integral sense
(see Definition~\ref{def: integral bounds} in Section~\ref{s: Main results}), then 
\eqref{f: gradient estimate M} holds for all $p$ in $(1,p_0)$ (see Theorem~A in Section~\ref{s: Main results}).  
Our condition is trivially satisfied if we assume standard pointwise lower bounds for the Ricci curvature, 
so that our result extends \cite{CTT} (which, as mentioned above, can also be obtained as an easy consequence of the $L^p$ boundedness of the first order 
local Riesz transform, proved in \cite{B}).  If, instead, $p_0$ is as above, $M$ has positive injectivity radius and nonnegative Ricci curvature
in a global $L^{p_0/2}$ integral sense (see Definition~\ref{def: integral bounds} in Section~\ref{s: Main results}), 
then \eqref{f: gradient estimate M} holds for all $p$ in $(1,\infty)$ (see Theorem~B in Section~\ref{s: Main results}).   

\smallskip
Our next set of results is concerned with Riesz transforms of even order.  
We prove the following:  
\begin{enumerate}
	\item{}
		if $M$ has positive injectivity radius and the Ricci curvature is (pointwise) bounded from below, then $\R_\tau^2$ is bounded from $\lp{M}$ to $\lp{M;T_2M}$ 
		for every $p$ in $(1,\infty)$ and $\tau>0$;
	\item{}
		if $M$ has positive injectivity radius and nonnegative Ricci curvature in the global $L^{p_0/2}$ sense
		for some $p_0>n$, then $\R_\tau^2$ is bounded from $\lp{M}$ to $\lp{M;T_2M}$  for every $p$ in $(1,\infty)$ and $\tau>0$;
	\item{}
		if $M$ has spectral gap and its and Ricci curvature is (pointwise) bounded from below, then $\R^2$ is bounded from $\lp{M}$ to $\lp{M;T_2M}$ 
		for every $p$ in $(1,2]$.  As a consequence of this and the Federer--Fleming inequality, the analogue of 
		\eqref{f: Sobolev BHn} holds on $M$;
	\item{}
		if $\ell\geq 1$, the Ricci tensor of $M$ and its derivatives up to the order $2\ell-2$ are 
		uniformly bounded, and $M$ has positive injectivity radius, then $\R^{2\ell}_\tau$ is bounded from $\lp{M}$ to $\lp{M;T_{2\ell}M}$  
		for every $p$ in $(1,2]$.  
\end{enumerate}

Note that (1) above was known under an additional pointwise upper bound on the Ricci curvature, thanks to work of G\"uneysu and the third author \cite{GP}. Subsequently, Baumgarth--Devyver--G\"uneysu \cite{BDG} proved that for $p<2$ one can replace the positivity of the injectivity radius with a bound on the whole Riemann tensor and its derivatives, as a consequence of some estimates on the covariant Riesz transforms.   Finally, a very recent and far reaching result due to Cao--Cheng--Thalmaier \cite{CCT} states that $\R^2_\tau$ is bounded from $L^p(M)$ to $\lp{M; T_2M}$ when $1<p \le2$ under the sole assumption of Ricci curvature bounded from below. There is no hope to extend this result to $p>2$ in full generality. Indeed, it is known \cite{MV0, HMRV2021} (see also \cite{DZ}) that, for every $p>2$, there exists a complete Riemannian manifold $(M,g)$ satisfying $\sect \ge0$ (in fact $\sect>0$ if $p>m$) on which $\R^2_\tau$ is unbounded in $L^p$ for every positive $\tau$. Apart from the case of Ricci-bounded geometry alluded to above, the only further set of assumptions ensuring the validity of \eqref{f: CZ M} when $p>2$ are given in \cite[Theorem 1.2]{CCT}. The manifolds considered therein must satisfy (Kato type) conditions on the curvature and its derivatives but, on the other hand, could have zero injectivity radius. Finally, in a different direction, let us recall that 
$\R^2_\tau$ is bounded from $L^2(M)$ to $L^2(M; T_2M)$ also on manifolds whose curvature is very negative, i.e. explodes polynomially to $-\infty$ in an asymptotic sense \cite{MV}.

Concerning (3) above, it was known under the additional assumption that $M$ has positive injectivity radius.  Indeed, $\R^2$
was known to be bounded from $\lp{M}$ to $\lp{M; T_2M}$ for $1<p<2$ \cite{MMV}.  Then the Federer--Fleming inequality and Bakry's estimate
allow to conclude. In a related direction, let us also point out that the study of the $L^p$ boundedness properties of $\R^2$ on complete manifolds whose full curvature tensor decays quadratically has been announced in \cite{Ca}. Finally, note that (4) was known under the additional assumption that $M$ has spectral gap (in which case an endpoint estimate
for $p=1$ was also provided).  

\medskip
In this paper we do not consider Riesz transforms of odd order $\geq 3$.  We believe that it is an interesting problem 
to find geometric conditions on $M$ under which either $\R_\tau^{2k+1}$ or $\R^{2k+1}$ is bounded on $L^p$, for some positive integer $k$.
A neat result by Anker \cite{A1} shows that if $M$ is a symmetric space of the noncompact type, then
the Riesz transforms of any order are bounded on $L^p$, $1<p<\infty$.

\medskip
The paper is organised as follows. In Section~\ref{s: Main results}, we give a precise statement of the main results. In Section~\ref{s: grad est}, we prove the $L^p$ gradient estimate \eqref{f: gradient estimate M} under local uniform $L^q$ Ricci bounds. The proof for large $p$ is based upon a related $L^\infty$ estimate \cite{DWZ2018} and a covering argument. The whole range $p>2$ is obtained via interpolation. In Section~\ref{section-GE-globalintegral}, the estimates \ref{f: gradient estimate M} are proved under global $L^q$ Ricci bounds, by exploiting the local expression in $W^{1,p}$-harmonic coordinates. To this end, the positivity of the injectivity radius is required. In Section~\ref{s: counterex}, we exhibit the (as far as we know) first examples in the literature of complete Riemannian manifolds which do not support \eqref{f: gradient estimate M} for large $p$. Such examples are obtained through a suitable conformal deformation of the Euclidean plane. Harmonic coordinates with a uniform $W^{1,q}$ bound are also the key to prove the $L^p$ boundedness of the second order Riesz transform in the case of lower bounded Ricci curvature and positive injectivity radius. This is the content of Section~\ref{s: CZ}.
Note that $W^{1, q}$-harmonic estimates for large enough $q$ imply a $C^{0, \alpha}$ control on the metric coefficients. This is an improvement on previously known bounds of the second order Riesz transform \cite{GP}, which relied on the existence of uniform $C^{1, \alpha}$-harmonic coordinates, and thus required stronger geometric assumptions. In Section~\ref{s: strong W2p}, we show that if $M$ has spectral gap and lower bounded Ricci curvature, then ($M$ supports an $L^p$ Poincar\'e inequality so that) the $W^{1,p}$ norm of a compactly supported function $u$ is bounded by $\|\Delta u\|_{L^p}$ when $1<p\le 2$. Combining with the second order bounds obtained in \cite{CCT}, this yields a control on the whole $W^{2,p}$-norm of $u$, analogous to \eqref{f: Sobolev BHn}. Finally, in Section~\ref{s: ho} we deal with the $L^p$ boundedness of higher even order local Riesz transforms. Namely, we use a trick which consists in considering the Cartesian product of $M$ with a hyperbolic plane. This allows to reduce the problem to previously known bounds for the (global) Riesz transforms on manifolds with a spectral gap.

\section{Assumptions and main results}
\label{s: Main results}

All over this paper, $M=(M,g)$ denotes a smooth complete non-compact $n$ dimensional Riemannian manifold without boundary and $p\in(1,\infty)$.

Throughout this paper, $C$ will denote a positive constant,
whose value may change from place to place. In each result, the constant $C$ will depend only on the geometric bounds assumed there, i.e. on $n$, $p$, the curvature bound and possibly the injectivity radius $i$ and the spectral gap, whenever these last two quantities are relevant.
Given a symmetric $2$-tensor field $T$, we have denoted by $\min T$ its lowest eigenvalue.\smallskip

In the literature, one can find two notions of integral curvature bounds, one of global nature and one of  uniform local nature.\smallskip

\begin{definition}\label{def: integral bounds}
	Suppose that $K\geq 0$, $R > 0$ and $1< p <+\infty$.  Set
	\begin{equation}\label{eq:Lpglobal}
		\varrho_{K}(x) :=  (\min \ric + (n-1)K^{2})_{-} (x)
	\end{equation}
(where $f_-$ denotes the negative part of $f$),
	\begin{align*}
		k(x, p, R, K) := R^2 \, \frac{\Vert \varrho_K \Vert_{L^{p}(B_R(x))}}{\mu(B_R(x))^{1/p}} 
		\quad\hbox{and}\quad
		k(p, R, K) := \sup_{x \in M} k(x, p, R, K).
	\end{align*}
	Say that:
	\begin{itemize}
		\item $M$ has Ricci curvature bounded from below by $-(n-1)K^{2}$ in the global $L^{p}$ sense if $\varrho_{K}\in L^{p}(M)$. 
		\item $M$ has an $\e>0$-amount of Ricci curvature below  $-(n-1)K^{2}$ in the $L^{p}$ sense at the scale $R$  if  $k(p,R,K)<\e$.
	\end{itemize}
\end{definition}

%

Our first main contribution is the following

\begin{theoremA}\label{thm:Lp gradient}
	Suppose that $n < p_0 < + \infty$. There exists a constant $\varepsilon = \varepsilon(p_0, n, K)> 0$ such that if 
	$k(p_0/2, 1, K) \leq \varepsilon$ for some $K\ge 0$,
	then the $L^{p}$ gradient estimate \eqref{f: gradient estimate M} holds on $M$ for every $1 < p \leq p_0$.
\end{theoremA}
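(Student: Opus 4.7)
The strategy is to split the range $1 < p \leq p_{0}$ as $(1,2] \cup \{p_{0}\} \cup (2, p_{0})$, proving \eqref{f: gradient estimate M} in each piece separately. For $1 < p \leq 2$ no curvature assumption is actually needed: the result of Coulhon--Duong \cite{CD2003} (see also \cite[Lemma 1.6]{HMRV2021}) gives \eqref{f: gradient estimate M} on any geodesically complete Riemannian manifold in this range, so this half of the range is free.

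For $p = p_{0}$ the main ingredient will be a Cheng--Yau-type a priori $L^{\infty}$ gradient bound on unit balls, proved by Dai--Wei--Zhang \cite{DWZ2018} under small integral Ricci curvature in the $L^{p_{0}/2}$ sense. Choosing $\varepsilon = \varepsilon(p_{0}, n, K)$ sufficiently small in the hypothesis $k(p_{0}/2, 1, K) \leq \varepsilon$, their pointwise estimate, combined with the trivial $\mu(B_{1/2}(x)) \leq \mu(B_{1}(x))$, yields the uniform local integrated bound
\[
\int_{B_{1/2}(x)} \mod{\nabla u}^{p_{0}} \, \dv \leq C \bigg( \int_{B_{1}(x)} \mod{u}^{p_{0}} \, \dv + \int_{B_{1}(x)} \mod{\Delta u}^{p_{0}} \, \dv \bigg),
\]
valid for every $x \in M$ and every smooth $u$ on $B_{1}(x)$, with $C$ independent of $x$. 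The uniform volume doubling available under integral Ricci bounds (cf.\ Petersen--Wei) then provides a countable covering $\{B_{1/2}(x_{i})\}_{i \in \nn}$ of $M$ whose enlargement $\{B_{1}(x_{i})\}_{i \in \nn}$ has uniformly bounded multiplicity. Summing the local estimate over $i$ delivers \eqref{f: gradient estimate M} at $p = p_{0}$ for every $u \in C_{c}^{\infty}(M)$.

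To cover the intermediate range $2 < p < p_{0}$ we interpolate. Since $(I - \Delta)^{-1}$ is bounded on $L^{p}$ in the present setting for every $p \in [2, p_{0}]$ (being the resolvent at $1$ of a non-negative self-adjoint operator, combined with sub-Markovianity of the heat semigroup), the estimate \eqref{f: gradient estimate M} is equivalent to the $L^{p}$-boundedness of the \emph{linear} operator $T := \nabla (I - \Delta)^{-1} \colon \lp{M} \to \lp{M; T_{1} M}$. At $p = 2$, integration by parts gives $\bignorm{T f}{2} \leq \bignorm{f}{2}$ via
\[
\bignorm{\nabla u}{2}^{2} + \bignorm{u}{2}^{2} = \bigprodo{(I - \Delta) u}{u} \leq \bignorm{(I - \Delta) u}{2} \bignorm{u}{2},
\]
while at $p = p_{0}$ the boundedness of $T$ is furnished by the previous step. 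Riesz--Thorin interpolation between these two endpoints yields the $L^{p}$-boundedness of $T$, and hence \eqref{f: gradient estimate M}, for every $p \in (2, p_{0})$.

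The delicate point will be calibrating the smallness parameter $\varepsilon$ in the second step so that both the Dai--Wei--Zhang pointwise estimate and the Bishop--Gromov-type doubling hold simultaneously with constants genuinely uniform in $x \in M$: this simultaneous uniformity is what enables the transition from local to global, through both the integration over $B_{1/2}(x)$ and the construction of a bounded-multiplicity unit-ball covering of $M$.
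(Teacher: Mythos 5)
Your plan coincides with the paper's own proof: the range is split into $(1,2]$ (handled by Coulhon--Duong), $p=p_0$ (Dai--Wei--Zhang pointwise estimate, integrated locally and globalized by a uniformly locally finite covering), and $(2,p_0)$ (Riesz--Thorin interpolation of $\nabla(I-\Delta)^{-1}$ between $L^2$ and $L^{p_0}$). The only small imprecision is that the Dai--Wei--Zhang bound is actually invoked at a possibly small scale $R_0\le 1$ rather than at unit scale -- one first uses the volume-comparison scaling of $k(q,R,K)$ to pass from the assumption $k(p_0/2,1,K)\le\varepsilon$ to smallness of $k(p_0/2,R_0,0)$ at the scale where DWZ applies -- but this does not change the structure of the argument.
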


\begin{remark}
\label{rmk:integral ricci vs ricci bounded from below}
Note that $\rho_K(x) = 0$ if and only if $\Ric(x) \ge- (n-1)K^2g_x$ where the inequality is intended in the sense of quadratic forms. 
In particular if the Ricci curvature satisfies the lower bound $\ric \geq -(n-1) K^2 g$, then $k(p, R, K) = 0$ for all $R> 0$ and $p \in (1, +\infty)$. 
Consequently, Theorem \ref{thm:Lp gradient} provides yet another alternative proof of the result by Cheng, Thalmaier and Thompson, \cite{CTT}, using only PDEs methods. 
On the other hand, the integral bounds we assume are in general weaker than the usual pointwise bounds; see Remark \ref{rmk:integral bounds} below.
\end{remark}

\begin{remark}
\label{rmk:extension to H2p}
If $(M, g)$ is a complete Riemannian manifold supporting an $L^p$ gradient estimate for some $p \in (1, +\infty)$, then \eqref{f: gradient estimate M} extends with the same constant to all functions in $H^{2,p}(M)$.
Indeed, if $u \in H^{2,p}(M) = \{f \in L^{p}(M): \Delta_{distr} f \in L^{p}(M)\}$, by a result of Milatovic, \cite[Appendix]{GPM2019}, there exists a sequence $\{u_k\} \subseteq C^\infty_c(M)$ such that $u_k \to u$ with respect to the $H^{2,p}$ norm. 
Applying \eqref{f: gradient estimate M} to $u_k$, we deduce that $\nabla u_k$ is Cauchy and thus converges in the space of $L^p$ vector fields. 
Testing $\nabla u_k$ against a smooth and compactly supported vector field and taking the limit shows in fact that $\nabla u_k$ converges in $L^p$ norm to the weak gradient $\nabla u$.
\end{remark}

We also obtain the following variant of Theorem \ref{thm:Lp gradient} in the case of global $L^{q}$ lower Ricci bounds.

\begin{theoremA}\label{th:GE-globalintegral}
	Suppose $r_{\inj}(M)>0$ and non-negative Ricci curvature in the global $L^{q/2}$ sense for some $n<q<+\infty$. 
	Then, for every $1<p<+\infty$, \eqref{f: gradient estimate M} holds on $M$.
\end{theoremA}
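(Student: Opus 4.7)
The plan is to split the range of $p$ into two regimes. For $1<p\leq 2$, \eqref{f: gradient estimate M} holds on every geodesically complete manifold by the theorem of Coulhon--Duong \cite{CD2003} (reproven in \cite{HMRV2021}), so no curvature hypothesis enters and this case is free. The work lies entirely in the regime $p>2$, where the strategy is to build uniform harmonic coordinates and reduce the gradient estimate to Euclidean $W^{2,p}$ elliptic regularity for operators with Hölder coefficients.

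First I would exploit the global assumption $\varrho_0 \in L^{q/2}(M)$ together with $r_{\mathrm{inj}}(M)>0$ to obtain a uniform small-scale volume comparison $\mathrm{vol}(B_r(x)) \geq v_0 > 0$ for balls of some fixed radius $r$, and consequently a uniform bound on the local integral Ricci quantity $k(x,q/2,r,0)$ as $x$ varies over $M$. With these uniform local hypotheses at hand, one then invokes the existence of $W^{1,q}$-harmonic coordinate charts (of Petersen--Wei type, adapted to the integral Ricci setting): there exist $r_0>0$ and $C_0>0$ such that at every $x \in M$ there is a harmonic chart $\varphi_x \colon B_{r_0}(x)\to \Omega_x \subset \mathbb{R}^n$ in which the pulled-back metric $\tilde g = (\varphi_x^{-1})^*g$ satisfies
\begin{equation*}
C_0^{-1}\delta \leq \tilde g \leq C_0\, \delta,\qquad \bignorm{\tilde g_{ij}}{W^{1,q}(\Omega_x)} \leq C_0 .
\end{equation*}
Since $q>n$, Morrey's embedding promotes this to uniform $C^{0,\alpha}$-control on $\tilde g_{ij}$, with $\alpha = 1 - n/q$.

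Second, in these coordinates the harmonicity of $\varphi_x$ kills all first-order terms and gives $\Delta_g u = \tilde g^{ij}\partial_i\partial_j \tilde u$ with $\tilde u = u\circ\varphi_x^{-1}$. The classical interior $W^{2,p}$-theory for linear elliptic equations with Hölder-continuous principal coefficients then yields, for every $1<p<\infty$,
\begin{equation*}
\bignorm{\tilde u}{W^{2,p}(B^{\mathrm{euc}}_{r_0/2})}
\leq C \Bigl(\bignorm{\tilde u}{L^p(B^{\mathrm{euc}}_{r_0})} + \bignorm{\Delta_g u}{L^p(B^{\mathrm{euc}}_{r_0})}\Bigr),
\end{equation*}
with $C$ depending only on $n,p,q,C_0,r_0$. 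Transporting back to $M$ via the uniform bi-Lipschitz bound on $\varphi_x$, this produces a \emph{uniform} local gradient estimate on each geodesic ball $B_{r_0/2}(x)$.

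Finally, I would close the argument with a Vitali-type covering: choose a maximal $r_0/4$-separated net $\{x_i\} \subset M$, so that $\{B_{r_0/2}(x_i)\}$ covers $M$ while the concentric balls $\{B_{r_0}(x_i)\}$ have uniformly bounded multiplicity (using the positive injectivity radius together with the volume comparison above). Summing the local estimates over $i$ for $u \in C^\infty_c(M)$ yields the global \eqref{f: gradient estimate M} with $C$ depending on $n$, $p$, $q$, $r_{\mathrm{inj}}(M)$ and $\bignorm{\varrho_0}{L^{q/2}(M)}$. The main obstacle is the first step: extracting uniform $W^{1,q}$-harmonic coordinates from a \emph{merely global} integrability of the negative part of Ricci. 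The non-uniform local smallness of $\varrho_0$ has to be traded, via the positive injectivity radius and uniform volume bound, against a uniform bound on $k(x,q/2,r,0)$, and the Petersen--Wei harmonic-radius estimate has to be revisited in this setting to rule out bubbling of the coordinate radius as $x$ escapes to infinity.
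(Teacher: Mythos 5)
Your strategy coincides with the paper's: establish a uniform lower bound on the $W^{1,q}$-harmonic radius (so that $q>n$ gives uniform $C^{0,\alpha}$ control of the metric coefficients), transfer the Laplacian to a divergence-free second-order operator with H\"older principal part via harmonicity of the chart, apply interior Euclidean $W^{2,p}$ estimates, and glue with a uniformly locally finite covering. The paper carries this out in Theorems~\ref{th:AC-Hi} and~\ref{th:GE-harmradius} together with Remark~\ref{rem:doubling}.

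Two remarks. First, the ``main obstacle'' you flag at the end --- a uniform lower bound on $r_{W^{1,q}}(x)$ under a \emph{global} $L^{q/2}$ Ricci bound plus positive injectivity radius, with no degeneration as $x\to\infty$ --- is not an open issue to be re-derived from Petersen--Wei; it is precisely the content of Hiroshima's $C^{\alpha}$-compactness theorem \cite{Hi} (alternative b) of Theorem~\ref{th:AC-Hi}), which is the result the paper invokes. Heuristically there is in fact no risk of ``bubbling at infinity'' under a global $L^{q/2}$ bound: as $x\to\infty$, $\|\varrho_0\|_{L^{q/2}(B_r(x))}\to 0$, so the local integral quantities improve rather than degenerate, while Croke's isoperimetric inequality gives the uniform volume lower bound from the positive injectivity radius. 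Second, your split into $1<p\le 2$ (Coulhon--Duong) and $p>2$ is unnecessary: once the uniform $W^{1,q}$-harmonic radius is in hand, the local elliptic $W^{2,p}$ estimate and the covering argument work verbatim for all $1<p<\infty$, which is exactly how the paper states Theorem~\ref{th:GE-harmradius}. Keeping the two cases separate costs nothing but adds no generality.
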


While several counterexamples to the validity of the $L^p$ Calder\'on--Zygmund inequalities have been found in recent years, \cite{GP, Li, Ve, MV}, in the case of $L^p$ gradient estimates the literature is lacking: see \cite[Section 9]{P2020} for an extensive account of the topic.
As mentioned in the introduction, using a sequence of conformal deformations on separated balls of the Euclidean plane, we are able to construct a complete Riemannian manifold on which the $L^p$ gradient estimate fails for every $2<p<+\infty$.

\begin{theoremA}
\label{thm:counterex intro}
Suppose that $n$ is an integer $\geq 2$.  For any $p>2$ there exists a complete $n$ dimensional Riemannian manifold $M$
where the $L^p$ gradient estimate \eqref{f: gradient estimate M} fails. 
\end{theoremA}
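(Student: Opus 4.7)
The plan is to construct $M$ as a conformal deformation of the Euclidean plane supported on a disjoint sequence of balls $B_k\subset\mathbb{R}^2$, and to promote to $n\ge 3$ via Cartesian product with the flat torus $\mathbb{T}^{n-2}$. On each deformed $B_k$ I would exhibit a compactly supported test function $u_k$ whose ratio $\|\nabla u_k\|_p/(\|u_k\|_p+\|\Delta u_k\|_p)$ exceeds $k$; since the $u_k$'s have disjoint supports, no finite constant $C$ can satisfy \eqref{f: gradient estimate M}. For $n\ge 3$, testing \eqref{f: gradient estimate M} on $M_0\times\mathbb{T}^{n-2}$ against functions independent of $\theta\in\mathbb{T}^{n-2}$ factors the three $L^p$ norms through $\mathrm{vol}(\mathbb{T}^{n-2})^{1/p}$, reducing to the two-dimensional case.

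The technical core is the local construction on $B_1(0)\subset\mathbb{R}^2$. I would take a radial conformal metric $g=e^{2\varphi}g_0$ with $\varphi$ equal to a large constant $A$ on a collar $\{a\le r\le 2a\}$ and vanishing off a slight thickening of it. The key exploit is the two-dimensional conformal identity $\Delta_g=e^{-2\varphi}\Delta_0$, which preserves Euclidean harmonicity. I would take as test function a smoothed version of the plateau--logarithm profile equal to $1$ for $r\le a$, to $\log(1/r)/\log(1/a)$ for $2a\le r\le 1$, and to $0$ for $r\ge 1$, with the smooth interpolation between the $1$-plateau and the logarithmic branch carried out entirely inside the high-$\varphi$ collar $[a,2a]$. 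Exploiting the conformal scalings
\[
\|u\|_{p,g}^p=\int|u|^p e^{2\varphi}\,dx, \quad \||\nabla u|_g\|_{p,g}^p=\int|\nabla u|_0^p e^{(2-p)\varphi}\,dx, \quad \|\Delta_g u\|_{p,g}^p=\int|\Delta_0 u|^p e^{(2-2p)\varphi}\,dx,
\]
the Euclidean bulk $\{2a<r<1\}$, where $\varphi=0$ and $u$ is $g_0$-harmonic, contributes only to the gradient term; since $|\nabla u|_0=1/(r\log(1/a))$ there and $p>2$, this contribution is of order $a^{(2-p)/p}/\log(1/a)$, which diverges as $a\to 0^+$ because $\int_{2a}^{1}r^{1-p}\,dr\sim a^{2-p}$. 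The calibration $A=A(a)\sim\log(1/a)$ with $e^{2A}a^2\sim 1$ then keeps $\|u\|_{p,g}\sim 1$ (the $e^{2A}$ volume weight on the narrow collar being balanced by its area $\sim a^2$), drives $\|\Delta_g u\|_{p,g}\to 0$ through the damping $e^{-2(p-1)A}$ of the corner-smoothing contribution, and leaves the collar contribution to $\|\nabla u\|_{p,g}$ subleading. Altogether,
\[
\frac{\||\nabla u|_g\|_{p,g}}{\|u\|_{p,g}+\|\Delta_g u\|_{p,g}}\ \gtrsim\ \frac{a^{-(p-2)/p}}{\log(1/a)}\ \xrightarrow[a\to 0^+]{}\ +\infty.
\]

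The main obstacle is the joint calibration of the triple $(a,A,\delta)$, where $\delta$ is the width of the smoothing of $u$ at $r=a$. The conformal height $A$ must be large enough that the $e^{-2(p-1)A}$ weight defeats the $|\Delta_0 u|^p\sim(aN\delta)^{-p}$ blow-up produced by the smoothed corner (with $N=\log(1/a)$), yet not so large that the $e^{2A}a^2$ product inflates $\|u\|_{p,g}$; the smoothing region of $u$ must be placed entirely \emph{inside} the high-$\varphi$ annulus, so that the damping of the Laplacian is actually enjoyed; and the collar's contribution to $\||\nabla u|_g\|_{p,g}$, weighted by $e^{(2-p)A}$, must remain subleading with respect to the Euclidean-harmonic bulk. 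Once the tuning is in place, the final assembly is a routine $L^p$-additivity argument across the disjoint balls, using that $e^{2\varphi}\ge 1$ ensures completeness of the resulting conformally flat metric on $\mathbb{R}^2$.
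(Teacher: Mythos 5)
Your construction is correct and reaches the same conclusion, but the local mechanism is genuinely different from the paper's, even though both ride on the same global scheme (conformal perturbation of $\mathbb{R}^2$ on a sequence of disjoint balls, plus a product with a compact factor to reach $n\ge 3$). In the paper's construction the conformal factor $\lambda\le 1$ develops a near-zero at the center of each ball ($\lambda_\varepsilon=(|x|^2+\varepsilon)^\beta$ with $\beta>1/(p-2)$), while the test function is \emph{affine}, hence Euclidean-harmonic with constant unit gradient, throughout the region where $\lambda$ is deformed; the divergence is produced entirely by the weight $\lambda^{2-p}\sim r^{2\beta(2-p)}$ on a constant Euclidean gradient, and the Euclidean Laplacian of the test function is supported in an outer annulus where $\lambda\equiv 1$, so the $L^p$ norm of $\Delta u$ never sees the deformation and there is nothing to calibrate. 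You run the construction with the opposite sign of the conformal exponent: $e^{2\varphi}\ge 1$, large on a collar, with a logarithmic test profile whose Euclidean gradient $\sim 1/(rN)$ carries an intrinsic near-singularity; the blow-up then comes from $\int_{2a}^1 r^{1-p}\,dr$, and the large conformal factor is used not to amplify the gradient but to \emph{damp} the Laplacian via $e^{(2-2p)\varphi}$ on the corner-smoothing collar, which forces the three-parameter calibration $(a,A,\delta)$ you describe. What your version buys is transparency about the role of $p>2$ (the borderline logarithmic divergence at $p=2$ is the classical Green-function phenomenon) and immediate completeness ($e^{2\varphi}\ge 1$ dominates the Euclidean metric); what the paper's version buys is a cleaner bookkeeping (the Laplacian term is unaffected by the deformation, so no balancing is needed) and, via the weighted sum $u_k=\sum_{m\le k}2^{-m}\varphi_m$, a single explicit sequence with bounded $\|u_k\|_p$, $\|\Delta u_k\|_p$ and diverging $\||\nabla u_k|\|_p$. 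Do note that in your scheme the thickening of $\varphi$ outside $[a,2a]$ must be kept narrow enough that its volume contribution to $\|u\|_{p,g}^p$ (weighted by at most $e^{2A}\sim a^{-2}$) stays $O(1)$, and the $u$-smoothing must sit strictly inside the high-$\varphi$ plateau; both points are implicit in your sketch and are exactly the technical care one has to take, but they do not threaten the argument.
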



As we will explain in Section~\ref{s: counterex}, the examples in Theorem C  shows that the result of Cheng, Thalmaier and Thompson on $L^p$ gradient estimates under Ricci lower bounds, \cite{CTT} is, in fact, optimal with respect to pointwise bounds.\medskip

The next contributions of the paper will concern Riesz transforms of even order $2k\ge 2$. As announced in the introduction, adopting a different point of view all the next theorems can be restated in term of Calder\'on--Zygmund inequalities, as a consequence of the following Proposition, whose proof is deferred to Section~\ref{s: CZ}.
\begin{proposition}\label{prop:equiv}
		Let $1<p<\infty$, $\tau > 0$ and let $k\ge 1$ be an integer. The local Riesz transform $\R_\tau^{2k}$ of order $2k$ is bounded from
		$\lp{M}$ to $\lp{M;T_{2k}M}$ if and only if the $L^p$ Calder\'on--Zygmund inequality or order $2k$ 
		\begin{equation} 
		    \label{f: CZ higher order}
			\bignorm{\mod{\nabla^{2k} u}}{p}
			\leq C \, \big[\bignorm{u}{p}+ \bignorm{\Delta^k u}{p}\big]  
			\quant u \in \Dom_{L^p}(\Delta^k)
		\end{equation}
		holds on $M$, where 
		\[
		\Dom_{L^p}(\Delta^k) = \{u\in L^p(M)\ :\ \Delta^ku\in L^p(M)\}
		\]
		is the domain of the Laplacian in $L^p$.
		 
		Moreover, when $k=1$ the latter assertions are also equivalent to
	\begin{equation*} 
		\bignorm{\mod{\nabla^2 u}}{p}
		\leq C \, \big[\bignorm{u}{p}+ \bignorm{\Delta u}{p}\big]  
		\quant u \in C_c^\infty(M).
	\end{equation*}
\end{proposition}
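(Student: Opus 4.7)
My plan is to treat both directions by exploiting the substitution $u=(\tau\cI-\Delta)^{-k}f$: with this choice, the $L^p$ boundedness of $\R_\tau^{2k}$ amounts to $\bignorm{\nabla^{2k}u}{p}\leq C\bignorm{f}{p}$, while the right-hand side of the Calder\'on--Zygmund inequality of order $2k$ controls the same quantity through $\bignorm{u}{p}+\bignorm{\Delta^k u}{p}$. The plan is thus to show that these two norms of $u$ are quantitatively equivalent to $\bignorm{f}{p}$, so that each formulation transcribes into the other.

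Before entering the equivalence I would first gather the semigroup machinery in $L^p$. Since $-\Delta$ extends from $C^\infty_c(M)$ to a sectorial generator of a bounded analytic semigroup on $L^p(M)$ with non-negative $L^p$ spectrum, $\tau\cI-\Delta$ is invertible on $L^p$ for every $\tau>0$ and its integer powers are closed. The key preliminary is the identification $\Dom_{L^p}((\tau\cI-\Delta)^k)=\Dom_{L^p}(\Delta^k)$ with equivalent graph norms, which follows from the Komatsu--Kato moment inequality
\[
\bignorm{\Delta^j u}{p}\leq C\bigl(\bignorm{u}{p}+\bignorm{\Delta^k u}{p}\bigr),\qquad 0\leq j\leq k,
\]
valid for every $u\in\Dom_{L^p}(\Delta^k)$. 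Combining this with the binomial expansion $(\tau\cI-\Delta)^k=\sum_{j=0}^k\binom{k}{j}\tau^{k-j}(-\Delta)^j$ shows that $\bignorm{(\tau\cI-\Delta)^k u}{p}$ and $\bignorm{u}{p}+\bignorm{\Delta^k u}{p}$ are comparable, and symmetrically the bounded inverse $(\tau\cI-\Delta)^{-k}$ sends $L^p(M)$ continuously into $\Dom_{L^p}(\Delta^k)$.

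With these ingredients in hand, the two implications reduce to one line each. Assuming \eqref{f: CZ higher order} and starting from $f\in L^p(M)$, I set $u=(\tau\cI-\Delta)^{-k}f$; then $u\in\Dom_{L^p}(\Delta^k)$, both $\bignorm{u}{p}$ and $\bignorm{\Delta^k u}{p}$ are controlled by $\bignorm{f}{p}$, and \eqref{f: CZ higher order} applied to $u$ yields the $L^p$ boundedness of $\R_\tau^{2k}$. Conversely, assuming that $\R_\tau^{2k}$ is $L^p$-bounded, for every $u\in\Dom_{L^p}(\Delta^k)$ I set $f=(\tau\cI-\Delta)^k u\in L^p(M)$, use $\nabla^{2k}u=\R_\tau^{2k}f$, and combine the boundedness of $\R_\tau^{2k}$ with the moment inequality to produce \eqref{f: CZ higher order}. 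Finally, for $k=1$ I would upgrade this equivalence to the $C_c^\infty(M)$ version via the density result of Milatovic already invoked in Remark~\ref{rmk:extension to H2p}, namely the graph-norm density of $C_c^\infty(M)$ in $\Dom_{L^p}(\Delta)=H^{2,p}(M)$; a routine approximation argument then transfers the inequality between the two formulations.

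The main delicate step is the domain identification and the associated moment inequality in the preliminary part, since for $k\geq 2$ one has to control all intermediate powers $\Delta^j u$ rather than just the extremes $u$ and $\Delta^k u$. Once these are granted, the rest of the proof amounts to a clean algebraic rearrangement.
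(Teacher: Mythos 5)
Your proposal is correct and follows essentially the same route as the paper's proof: both proceed by the substitution $u=(\tau\cI-\Delta)^{-k}f$, reduce the equivalence to comparing $\bignorm{(\tau\cI-\Delta)^k u}{p}$ with $\bignorm{u}{p}+\bignorm{\Delta^k u}{p}$ on $\Dom_{L^p}(\Delta^k)$, and for $k=1$ pass to the $C^\infty_c$ formulation via Milatovic's density theorem. The only difference is in how this norm equivalence is established: the paper observes that $\psi(\la)=(\la^k+\tau)(\la+\tau)^{-k}$ and $1/\psi$ both lie in the extended Dunford class and invokes the functional calculus for sectorial operators, whereas you obtain the two-sided bound by combining the moment inequality for intermediate powers with a binomial expansion of $(\tau\cI-\Delta)^{\pm k}$. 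These are interchangeable pieces of sectorial-operator theory; your version is a touch more elementary and explicit, the paper's is more compact, and both rely on the same underlying fact that $-\Delta$ is sectorial on $L^p(M)$.
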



First, we prove the $L^p$ boundedness of the local second order Riesz transform (resp. the validity of the $L^p$ Calder\'on--Zygmund inequality), on manifolds with positive injectivity radius and a lower bound on the Ricci curvature.
\begin{theoremA}\label{th:CZ}
	Suppose that $r_{\inj}(M)>0$, and either  $\ric \geq -(n-1)K^{2}$ for some $K \geq 0$, or 
	the Ricci curvature is nonnegative in the global $L^{q/2}$ sense for some $q > n$.  Then, for any $\tau>0$, $\R_\tau^2$ is bounded from
	$\lp{M}$ to $\lp{M;T_2M}$ for every $1<p<+\infty$.  
\end{theoremA}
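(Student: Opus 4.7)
By Proposition~\ref{prop:equiv}, it suffices to establish the $L^p$ Calder\'on--Zygmund inequality \eqref{f: CZ M} for every $1<p<\infty$. The plan is to combine three ingredients: (i) the existence of harmonic coordinates with uniform $W^{1,q}$ control on the metric coefficients for some $q>n$; (ii) Euclidean $L^p$ elliptic regularity for non-divergence form operators with uniformly H\"older continuous leading coefficients; and (iii) a uniformly locally finite covering argument. The choice of harmonic (rather than arbitrary) coordinates is essential here, because in a harmonic chart the Laplace--Beltrami operator takes the pure second-order form $\Delta = g^{ij}\partial_i\partial_j$, so no uncontrolled first-order terms appear.

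First, under either set of hypotheses, standard results --- Anderson/Petersen--Wei in the pointwise case, Dai--Wei--Zhang and its variants in the integral case --- furnish a uniform harmonic radius $r_0>0$ and a constant $\Lambda\geq 1$ such that every $x\in M$ admits a harmonic chart $\varphi_x : B_{r_0}(x) \to \BR^n$ in which
\[
\Lambda^{-1}\delta_{ij} \leq g_{ij} \leq \Lambda\, \delta_{ij}, \qquad \|g_{ij}\|_{W^{1,q}} \leq \Lambda.
\]
Morrey's embedding upgrades this to a uniform $C^{0,\alpha}$ control of the $g_{ij}$ with $\alpha = 1-n/q$. Reading off in coordinates and using a standard cutoff to localise from $B_{r_0}(x)$ to $B_{r_0/2}(x)$, the Euclidean $L^p$ Calder\'on--Zygmund estimate for non-divergence form operators with uniformly continuous leading coefficients yields, uniformly in $x$,
\[
\|\partial^2 u\|_{\lp{B_{r_0/2}(x)}} \leq C \bigl(\|u\|_{\lp{B_{r_0}(x)}} + \|\Delta u\|_{\lp{B_{r_0}(x)}}\bigr).
\]

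To pass from coordinate to covariant second derivatives I would exploit $\nabla^2_{ij}u = \partial_i\partial_j u - \Gamma^k_{ij}\partial_k u$. The Christoffel symbols are first derivatives of the metric, hence controlled in $L^q$ with $q>n$, and the cross term $|\Gamma||\nabla u|$ is absorbed via the $L^p$ gradient estimate \eqref{f: gradient estimate M}, which holds under the present hypotheses by Theorem~\ref{thm:Lp gradient} (pointwise case) or Theorem~\ref{th:GE-globalintegral} (integral case). Finally, Bishop--Gromov volume comparison in the pointwise case, or the analogous doubling statement under integral Ricci bounds, together with the positive injectivity radius, produce a covering of $M$ by balls $\{B_{r_0/2}(x_i)\}_i$ whose enlargements $\{B_{r_0}(x_i)\}_i$ have uniformly bounded overlap; summing the local inequalities over this cover yields \eqref{f: CZ M} and hence, by Proposition~\ref{prop:equiv}, the claimed $L^p$ boundedness of $\R_\tau^2$ for every $\tau>0$.

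The main obstacle is the regularity threshold: $W^{1,q}$ with $q>n$ delivers only $C^{0,\alpha}$ control of the metric coefficients, which is just barely enough for Euclidean non-divergence $L^p$ theory to operate across the full range $1<p<\infty$. This is precisely the technical point where the present argument improves upon \cite{GP}, whose reliance on $C^{1,\alpha}$-harmonic coordinates required stronger curvature assumptions (notably an upper Ricci bound in the pointwise case and much stronger control in the integral case). A secondary delicate point is to ensure that the modulus of continuity of the $g_{ij}$, and hence the constant in the Euclidean Calder\'on--Zygmund estimate, is genuinely uniform in $x$; this follows from the uniform $W^{1,q}$ bound via a quantitative Morrey embedding.
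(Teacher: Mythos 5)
Your overall strategy coincides with the paper's: reduce to the Calder\'on--Zygmund inequality via Proposition~\ref{prop:equiv}, obtain uniform $W^{1,q}$-harmonic coordinates (for $q>n$) from Anderson--Cheeger and Hiroshima, apply Euclidean $L^p$ elliptic regularity in each chart, correct for the first-order term $\Gamma^k_{ij}\partial_k u$, and patch by a bounded-overlap covering. That is exactly the route taken in Section~\ref{s: CZ} (Theorem~\ref{th: CZ under W1q harmonic control} and Lemma~\ref{lemma:CZ}).

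However, there is a genuine gap at the step where you claim that the cross term $|\Gamma|\,|\nabla u|$ is ``absorbed via the $L^p$ gradient estimate.'' The Christoffel symbols are controlled only in $L^q$, not $L^\infty$. H\"older's inequality then gives
\[
\|\Gamma^k_{ij}\partial_k u\|_{L^p(B^e_{r/2})} \lesssim \|\Gamma\|_{L^q}\,\bigl\||\nabla^e u|\bigr\|_{L^{pq/(q-p)}(B^e_{r/2})},
\]
and the exponent $pq/(q-p)$ is \emph{strictly larger} than $p$. The $L^p$ gradient estimate \eqref{f: gradient estimate M} controls the $L^p$ norm of $|\nabla u|$, not its $L^{pq/(q-p)}$ norm, so it does not absorb this term directly. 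What is missing is the additional Sobolev/Morrey step which converts the higher-integrability norm of the gradient back into $L^p$ norms of the Euclidean Hessian and of the gradient; concretely, one must show
\[
\bigl\||\nabla^e u|\bigr\|_{L^{pq/(q-p)}(B^e_{r/2})} \lesssim \bigl\||\Hess^e u|\bigr\|_{L^p(B^e_{r/2})} + \bigl\||\nabla u|\bigr\|_{L^p(B_r(z))},
\]
which is precisely the content of Lemma~\ref{lemma:CZ}, and requires a case analysis in $p$ ($p<n$ via Sobolev, $p=n$ via a borderline Sobolev--H\"older combination, $p>n$ via Morrey and H\"older). Only \emph{after} this reduction does the $L^p$ gradient estimate enter, to absorb the remaining $\|\,|\nabla u|\,\|_{L^p}$ into $\|u\|_{L^p}+\|\Delta u\|_{L^p}$. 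A related technical point you pass over silently: the H\"older exponents above force $q>p$ in addition to $q>n$, so for a fixed $p$ one must choose $q>\max(n,p)$; this is harmless because the harmonic radius bound is uniform for every $q>n$, but it needs to be said.
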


In particular, as explained in \cite{Ve}, we have the validity of a new density result in Sobolev spaces.

\begin{corollaryA}\label{coro:density}
Under the assumptions of Theorem \ref{th:CZ}, $C^{\infty}_{c}(M)$ is dense in $W^{2,p}(M)$ for every $1<p<+\infty$.
\end{corollaryA}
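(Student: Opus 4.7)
The plan is to combine the $L^p$ Calder\'on--Zygmund inequality granted by Theorem~\ref{th:CZ} (via Proposition~\ref{prop:equiv}) with the $L^p$ gradient estimate \eqref{f: gradient estimate M}. Under the hypotheses of Theorem~\ref{th:CZ}, the latter is available on the entire range $1<p<\infty$: in the case of a pointwise lower Ricci bound it follows from Theorem~\ref{thm:Lp gradient} (via Remark~\ref{rmk:integral ricci vs ricci bounded from below}, letting $p_0$ be arbitrarily large), while in the nonnegative global $L^{q/2}$ case it is provided directly by Theorem~\ref{th:GE-globalintegral}. Once both estimates are in place, the full $W^{2,p}$-norm will become equivalent, on $C_c^\infty(M)$, to the graph norm $\norm{u}{p} + \norm{\Delta u}{p}$, and the density of $C_c^\infty(M)$ in $W^{2,p}(M)$ will reduce to Milatovic's density result for $C_c^\infty(M)$ inside $\Dom_{L^p}(\Delta) = H^{2,p}(M)$ recalled in Remark~\ref{rmk:extension to H2p}.

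Concretely, I would first observe that for every $u \in C_c^\infty(M)$, the gradient estimate and the Calder\'on--Zygmund inequality together yield
\[
\norm{u}{p} + \norm{\nabla u}{p} + \norm{\nabla^2 u}{p} \leq C\,\big(\norm{u}{p} + \norm{\Delta u}{p}\big),
\]
while the reverse inequality is trivial since $\Delta u = \operatorname{trace}(\nabla^2 u)$. Hence the $W^{2,p}$-norm and the graph norm are equivalent on $C_c^\infty(M)$. Next, given an arbitrary $u \in W^{2,p}(M)$, the inclusion $\nabla^2 u \in L^p(M)$ forces $\Delta u \in L^p(M)$, so that $u$ belongs to $\Dom_{L^p}(\Delta)$. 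Milatovic's theorem then provides a sequence $\{u_k\} \subseteq C_c^\infty(M)$ such that $u_k \to u$ and $\Delta u_k \to \Delta u$ in $L^p(M)$. Applying the norm equivalence to the differences $u_k - u_j \in C_c^\infty(M)$ shows that $\{u_k\}$ is Cauchy in $W^{2,p}(M)$, and its limit must coincide with $u$ by uniqueness of the $L^p$ limit, establishing the desired density.

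The only delicate ingredient I foresee is verifying that the gradient estimate \eqref{f: gradient estimate M} is truly at our disposal on the full scale $1<p<\infty$ and not only for $p\leq 2$ (a range available for free on every complete manifold by Coulhon--Duong). This is exactly what Theorems~\ref{thm:Lp gradient} and~\ref{th:GE-globalintegral} accomplish under the two alternative curvature hypotheses of Theorem~\ref{th:CZ}, so no further argument beyond matching the assumptions should be needed.
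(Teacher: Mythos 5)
Your proof is correct and follows the standard route that the paper itself invokes (by deferring to \cite{Ve}): under the hypotheses of Theorem~\ref{th:CZ}, the $L^p$ gradient estimate and the Calder\'on--Zygmund inequality together make the $W^{2,p}$-norm equivalent to the graph norm $\norm{u}{p}+\norm{\Delta u}{p}$ on $C_c^\infty(M)$, after which Milatovic's density of $C_c^\infty(M)$ in $H^{2,p}(M)$ converts a Cauchy sequence in the graph norm into one in $W^{2,p}(M)$ converging to $u$. You were also right to flag, and to resolve, the only point requiring care, namely that the gradient estimate is available on the full range $1<p<\infty$ under both alternative curvature hypotheses (via Theorem~\ref{thm:Lp gradient} with Remark~\ref{rmk:integral ricci vs ricci bounded from below} in the pointwise case, and via Theorem~\ref{th:GE-globalintegral} in the global integral case).
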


\begin{remark}
As it happens for the Calder\'on--Zygmund inequality of Theorem \ref{th:CZ}, also the density result in Corollary \ref{coro:density} was already known when $1\le p \le 2$ in the wider class of complete manifolds with a pointwise lower Ricci bound (indeed, a controlled growth of the negative part of the Ricci curvature is allowed in this case). See \cite{HMRV2021} and references therein. 
\end{remark}

In the third main result of the paper we prove the strong $W^{2,p}$-estimate that, in particular, includes the  $L^p$ boundedness of the (global) Riesz transform $\R^2$.
Unfortunately, so far we have not been able to obtain its validity under integral Ricci bounds. On the positive side, 
we are able to remove the injectivity radius assumption in \cite{MMV} and to get an even stronger inequality.
We say that $M$ has spectral gap if the bottom $b$ of the $L^2$ spectrum of $-\Delta$ is strictly positive, i.e. $b>0$. We will prove the following

\begin{theoremA}\label{th:CZ'}
Suppose that $\ric \geq -(n-1)K^{2}$ and that $M$ has spectral gap. Then, for any fixed $1<p \leq 2$, the strong $W^{2,p}$-estimate 
\begin{equation}\tag{$W\!(2,p)$}\label{W2p}
	\bignorm{|\nabla u| }{L^p}+ \bignorm{|\nabla^2 u|}{L^p}   \leq C \| \Delta u \|_{L^{p}},\quad \forall u \in C^{\infty}_{c}(M),
\end{equation} holds on $M$ for some $C>0$.

Consequently, the whole $W^{2,p}$ norm of $u$ can be bounded in terms of the $L^p$ norm of its Laplacian.
\end{theoremA}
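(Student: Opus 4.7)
The plan is to assemble \eqref{W2p} from three ingredients: the first-order gradient estimate \eqref{f: gradient estimate M} of Coulhon--Duong, which holds on any geodesically complete manifold when $1<p\leq 2$; the second-order $L^p$ Calder\'on--Zygmund inequality of Cao--Cheng--Thalmaier \cite{CCT}, valid under the hypothesis $\ric\geq-(n-1)K^2$ for $1<p\leq 2$; and a Poincar\'e-type estimate
\[
\|u\|_{L^p}\leq C\|\Delta u\|_{L^p},\qquad u\in C_c^\infty(M),
\]
which will do all the work of absorbing the extraneous $\|u\|_{L^p}$ term appearing on the right-hand sides of the previous two. Once this is in hand, the chain
\[
\bigl\||\nabla u|\bigr\|_{L^p}+\bigl\||\nabla^2 u|\bigr\|_{L^p}\leq C\bigl(\|u\|_{L^p}+\|\Delta u\|_{L^p}\bigr)\leq C'\|\Delta u\|_{L^p}
\]
closes the strong $W^{2,p}$-estimate; a further application of the Poincar\'e inequality immediately controls the full $W^{2,p}$-norm by $\|\Delta u\|_{L^p}$.

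The heart of the argument is therefore the Poincar\'e inequality, and I would obtain it from the spectral gap by a standard semigroup interpolation. The Laplace--Beltrami heat semigroup $\{e^{t\Delta}\}_{t\geq 0}$ on a complete manifold is sub-Markovian, so it is a contraction on $L^\infty$ and, by duality via self-adjointness on $L^2$, also on $L^1$. The spectral gap yields the decay $\|e^{t\Delta}\|_{2\to 2}\leq e^{-bt}$. Riesz--Thorin interpolation between $L^1$ and $L^2$ then gives, for every $1<p\leq 2$,
\[
\bigl\|e^{t\Delta}\bigr\|_{p\to p}\leq e^{-b_p t},\qquad b_p:=\frac{2b(p-1)}{p}>0.
\]

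With this decay in hand, I would use the semigroup identity
\[
u=-\int_0^{\infty}e^{t\Delta}(\Delta u)\,dt,\qquad u\in C_c^\infty(M),
\]
which follows by integrating $\frac{d}{dt}e^{t\Delta}u=e^{t\Delta}\Delta u$ from $0$ to $\infty$, the boundary term at infinity vanishing in $L^p$ thanks to the decay just established. Taking $L^p$-norms under the integral and using the decay of $\|e^{t\Delta}\|_{p\to p}$ yields
\[
\|u\|_{L^p}\leq\int_0^{\infty}e^{-b_p t}\,dt\cdot\|\Delta u\|_{L^p}=\frac{1}{b_p}\|\Delta u\|_{L^p},
\]
which is the desired Poincar\'e inequality.

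The main obstacle is conceptual rather than technical: the genuinely deep input (the second-order Calder\'on--Zygmund inequality under a mere Ricci lower bound for $1<p\leq 2$) is supplied by \cite{CCT}, and what must be added is the observation that a spectral gap, through a classical semigroup interpolation argument, upgrades the additive form of the standard first- and second-order $L^p$ estimates to the strong multiplicative form \eqref{W2p}. A mild caveat to be verified along the way is the convergence in $L^p$ of the semigroup integral representation of $u$, but this is automatic from the exponential decay of $\|e^{t\Delta}\|_{p\to p}$ together with the fact that $\Delta u\in L^p(M)$ for $u\in C_c^\infty(M)$.
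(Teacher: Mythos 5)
Your proposal is correct, and it takes a genuinely different route from the paper for the crucial absorption step. Both proofs assemble \eqref{W2p} from the Coulhon--Duong first-order gradient estimate, the Cao--Cheng--Thalmaier second-order Calder\'on--Zygmund inequality, and a Poincar\'e-type bound $\|u\|_{L^p}\leq C\|\Delta u\|_{L^p}$ — the paper isolates the latter as Lemma~\ref{lemma:Lp-estimate-u} — but you and the paper obtain that bound differently. The paper first invokes the Cheeger--Buser equivalence (which is where the Ricci lower bound is used a second time, not just in CCT) to upgrade the $L^2$ spectral gap to an $L^p$ Poincar\'e inequality $\|u\|_{L^p}\leq C\||\nabla u|\|_{L^p}$, and then absorbs the gradient using the multiplicative Coulhon--Duong estimate $\||\nabla u|\|_{L^p}\leq C\|u\|_{L^p}^{1/2}\|\Delta u\|_{L^p}^{1/2}$ together with Young's inequality. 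You instead prove $\|u\|_{L^p}\leq C\|\Delta u\|_{L^p}$ directly from the spectral gap by interpolating the heat-semigroup contraction on $L^1$ with the $L^2$ exponential decay, and then integrating $u=-\int_0^\infty e^{t\Delta}(\Delta u)\,dt$. Your route is purely functional-analytic and does not use the Ricci bound at all in this step, so it yields the Poincar\'e-type bound on any complete manifold with spectral gap (indeed for any $1<p<\infty$, interpolating against $L^\infty$ when $p>2$); the restriction to $1<p\leq 2$ in the theorem then comes solely from the other two ingredients. The paper's route is slightly less general at this step but has the merit of making explicit where the $p\leq 2$ restriction enters through the multiplicative gradient estimate, and of connecting the result to the geometric Cheeger--Buser circle of ideas.
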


It is worth noting that Calder\'on--Zygmund estimates can be derived for higher order derivatives up to imposing more stringent conditions on the geometry of the underlying manifold.

As recalled above, it was proved in \cite{MMV} that the Riesz transform $\R^{2\ell}$ is bounded in $L^p(M)$ in the range $1<p \leq 2$, provided the geometry is bounded at the order $2\ell-2$ and $M$ has a spectral gap. We shall show how to remove the latter condition.

\begin{theoremA}\label{th:CZ-2k}
	Suppose that $\ell$ is a positive integer. Let $\tau>0$. Assume that $r_{\inj}(M)>0$ and that the covariant derivatives 
	of the Ricci tensor are uniformly bounded up to the order $2\ell-2$. Then $\R_\tau^{2\ell}$ is bounded from 
	$\lp{M}$ to $\lp{M;T_{2\ell}M}$ for every $p$ in $(1,2]$.
\end{theoremA}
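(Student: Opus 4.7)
The plan is to reduce the statement to the result of~\cite{MMV} for the global Riesz transform under a spectral gap hypothesis, by means of a product manifold trick. Set $\widetilde{M} := M \times \mathbb{H}^2$ endowed with the product metric. I would first verify that $\widetilde{M}$ meets the hypotheses of~\cite{MMV}: the injectivity radius of the product equals $\min(r_{\inj}(M), r_{\inj}(\mathbb{H}^2)) = r_{\inj}(M) > 0$; the Ricci tensor decomposes along the orthogonal splitting $T\widetilde{M} = TM \oplus T\mathbb{H}^2$ as $\Ric_{\widetilde{M}} = \Ric_M \oplus \Ric_{\mathbb{H}^2}$, and since $\mathbb{H}^2$ has parallel Ricci tensor (constant curvature), the covariant derivatives $\nabla_{\widetilde{M}}^k \Ric_{\widetilde{M}}$ are uniformly bounded for every $0 \leq k \leq 2\ell - 2$; finally, the bottom of the $L^2$-spectrum of $-\Delta_{\widetilde{M}}$ equals $b(M) + b(\mathbb{H}^2) \geq 1/4 > 0$, so $\widetilde{M}$ enjoys a spectral gap. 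Hence~\cite{MMV} yields the boundedness of $\R^{2\ell}_{\widetilde{M}} = \nabla^{2\ell}(-\Delta_{\widetilde{M}})^{-\ell}$ on $L^p(\widetilde{M})$ for every $p \in (1,2]$, equivalently the Calder\'on--Zygmund-type inequality $\|\nabla^{2\ell}_{\widetilde{M}} \tilde{v}\|_{L^p(\widetilde{M})} \leq C \|\Delta^\ell_{\widetilde{M}} \tilde{v}\|_{L^p(\widetilde{M})}$ for every $\tilde{v} \in C_c^\infty(\widetilde{M})$.

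Fix once and for all a nonzero $\phi \in C_c^\infty(\mathbb{H}^2)$. For $u \in C_c^\infty(M)$, set $\tilde{u}(x,y) := u(x)\phi(y) \in C_c^\infty(\widetilde{M})$. Because the Levi-Civita connection of the product respects the orthogonal splitting of the tangent bundle, the component of $\nabla^{2\ell}_{\widetilde{M}} \tilde{u}$ with all indices along $TM$ equals $(\nabla^{2\ell}_M u)(x)\,\phi(y)$ and is orthogonal to all the remaining components; expressing $|\nabla^{2\ell}_{\widetilde{M}} \tilde{u}|^2$ in a split orthonormal frame yields the pointwise lower bound $|\nabla^{2\ell}_M u(x)|\,|\phi(y)| \leq |\nabla^{2\ell}_{\widetilde{M}} \tilde{u}(x,y)|$. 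Integrating this in $L^p(\widetilde{M})$ via Fubini, combining with the CZ inequality on $\widetilde{M}$, and expanding $\Delta^\ell_{\widetilde{M}} \tilde{u} = \sum_{j=0}^\ell \binom{\ell}{j}(\Delta^j_M u) \otimes (\Delta^{\ell-j}_{\mathbb{H}^2}\phi)$ (using $[\Delta_M, \Delta_{\mathbb{H}^2}]=0$) give
\[
\|\nabla^{2\ell}_M u\|_{L^p(M)}\, \|\phi\|_{L^p(\mathbb{H}^2)} \leq C \sum_{j=0}^\ell \binom{\ell}{j}\, \|\Delta^j_M u\|_{L^p(M)}\, \|\Delta^{\ell-j}_{\mathbb{H}^2}\phi\|_{L^p(\mathbb{H}^2)}.
\]

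Since $-\Delta_M$ is $m$-accretive and therefore sectorial on $L^p(M)$, the moment inequality~\cite[Proposition~6.6.4]{Haa} and Young's inequality give $\|\Delta^j_M u\|_{L^p(M)} \leq C_j (\|u\|_{L^p(M)} + \|\Delta^\ell_M u\|_{L^p(M)})$ for every $0 \leq j \leq \ell$. Inserting this into the previous display and absorbing the fixed $\phi$-dependent constants produces~\eqref{f: CZ higher order} for $u \in C_c^\infty(M)$; extending by density from $C_c^\infty(M)$ to $\Dom_{L^p}(\Delta_M^\ell)$ and invoking Proposition~\ref{prop:equiv} then yields the claimed $L^p$-boundedness of $\R_\tau^{2\ell}$. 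The main technical step is the last density extension in the higher-order graph norm, which I expect to carry out by combining heat-semigroup regularization (well-defined since $M$ has Ricci bounded from below) with a radial cutoff under the bounded geometry assumption, extending the $k=1$ argument of Milatovic recalled in Remark~\ref{rmk:extension to H2p}.
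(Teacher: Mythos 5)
Your proposal follows essentially the same route as the paper: pass to $M \times \mathbb{H}^2$, verify the hypotheses of~\cite{MMV} on the product (spectral gap $\geq 1/4$, bounded Ricci derivatives, positive injectivity radius), specialize the resulting $L^p$ bound on the global Riesz transform to tensor products $u\otimes\phi$, expand $\Delta^\ell_{\widetilde{M}}$ binomially, absorb the intermediate powers by the moment inequality, and close via Proposition~\ref{prop:equiv}. The one place you deviate is the final step: you establish the Calder\'on--Zygmund inequality only for $u\in C_c^\infty(M)$ and then propose a density extension to $\Dom_{L^p}(\Delta^\ell_M)$, which you yourself flag as requiring a higher-order version of Milatovic's $k=1$ argument; this is not in the cited literature and would need its own proof. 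The paper sidesteps the issue entirely: the inequality on $\widetilde{M}$, once rephrased through Proposition~\ref{prop:equiv}, already holds for all of $\Dom_{L^p}(\Delta^\ell_{\widetilde{M}})$, so one applies it directly to $\varphi\otimes\psi$ with $\varphi\in\Dom_{L^p}(\Delta^\ell_M)$ and $\psi\in C_c^\infty(\mathbb{H}^2)$ (after checking, which is routine given the product structure, that such $\varphi\otimes\psi$ lies in the domain on the product), thereby obtaining~\eqref{f: CZ higher order} on all of $\Dom_{L^p}(\Delta^\ell_M)$ with no density claim at all. Adopting that shortcut would make your argument complete.
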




\section{Gradient estimtes: local uniform \texorpdfstring{$L^q$}{Lq} Ricci bounds}\label{s: grad est}

This section is devoted to proving Theorem \ref{thm:Lp gradient}.
Preliminarly, we point out the following facts, which will be repeatedly used in the sequel.

\begin{remark}
\label{rmk:controll on k}
As noted in Section~2.3 in \cite{PW2001} for the case $K = 0$, smallness of $k(q, R_0, K)$ at a fixed scale $R_0$ implies a control on $k(q, R, K)$ for all scales $R > 0$. 
This is a consequence of a volume comparison result contained in \cite[Lemma 10]{BPS2020}. 
Indeed, if $q > n/2$ there exists $\varepsilon = \varepsilon(n, q, K)>0$ such that if $k(q, R_2, K) < \varepsilon$, then for every $0 < R_1 < R_2$ one has
\begin{equation*}
    k(q, R_1, K) \leq 4 \left(\frac{R_1}{R_2}\right)^2 \left(\frac{v_K(R_2)}{v_K(R_1)}\right)^{\frac{1}{q}} k(q, R_2, K),
\end{equation*}
where $v_K(R)$ is the volume of the geodesic ball of radius $R$ in the $n$ dimensional space form of constant curvature $K$. 
Since $v_K(R_1)\sim R_1^n$, $k(q, R_1, K) \to 0$ as $R_1 \to 0$, i.e., $k(q, R_1, K)$ can be made arbitrarily small.
See Corollary 13 in \cite{BPS2020}.

Note also that $k(p, r, K) \leq k(q, r, K)$ whenever $p\leq q$.
\end{remark}

Under the assumption that $k(p/2, 1, K)$ is small, we first prove a local $L^{p}$ gradient estimate, which is obtained integrating a local gradient estimate proved in \cite{DWZ2018}. In what follows, we use the notation
\[
\| u \|_{L^{p}(\Omega)}^{\ast} = \left( \fint_{\Omega} |u|^{p} \right)^{1/p} = \left( \frac{1}{\vol(\Omega)}\int_{\Omega} |u|^{p} \right)^{1/p}.
\]
\begin{lemma}
\label{lem:local L^p gradient}
Let $p > n$. There exists $\varepsilon =\varepsilon(n, p, K) > 0$, $C(n, p)> 1$ and $0 < R_0 \leq 1$ such that if $k(p/2, 1, K) \leq \varepsilon$, then 
\begin{equation}
    \label{eq:half harnack}
    \sup_{B_{R/2}(x)} |\nabla u |^2 \leq C R^{-2} \left[(\Vert u \Vert^*_{L^{2}(B_R(x))})^2 + (\Vert \Delta u \Vert^*_{L^{p}(B_R(x))})^2 \right]
\end{equation}
for all $0<R\leq R_0$, for all $x \in M$ and for all smooth functions $u$ on $B_1(x)$. 
Moreover, there exists a constant $D(n, p) > 0$ such that
\begin{equation}
    \label{eq:local Lp gradient}
    \bigl\Vert |\nabla u| \bigr\Vert^p_{L^{p}(B_{R/2}(x))} \leq D R^{-p} \left(\Vert u \Vert^p_{L^{p}(B_R(x))} + \Vert \Delta u \Vert^p_{L^{p}(B_R(x))} \right)
\end{equation}
for all $x \in M$, $0< R \leq R_0$ and all smooth functions $u$ on $B_1(x)$. 
\end{lemma}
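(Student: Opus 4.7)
The plan is to derive both estimates from a pointwise gradient bound of Cheng--Yau type valid under integral Ricci curvature bounds, established in \cite{DWZ2018}, combined with H\"older's inequality and elementary volume comparison.

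First, to establish \eqref{eq:half harnack}, I would directly invoke the gradient estimate of Dai--Wei--Zhang. Their result provides, for every $q > n/2$, a constant $\varepsilon_0 = \varepsilon_0(n, q, K) > 0$ such that whenever $k(q, R, K) \leq \varepsilon_0$ one has a pointwise bound for $|\nabla u|$ on $B_{R/2}(x)$ in terms of (a suitable) average of $u$ and an $L^q$-average of $\Delta u$ on $B_R(x)$. Taking $q = p/2 > n/2$, the hypothesis $k(p/2, 1, K) \leq \varepsilon$ together with Remark~\ref{rmk:controll on k} would guarantee that $k(p/2, R, K) \leq \varepsilon_0$ for every $0 < R \leq R_0$, where $R_0 \in (0,1]$ is chosen small enough depending on $n$, $p$, $K$. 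If the Dai--Wei--Zhang estimate happens to be formulated in terms of $\sup_{B_R} |u|$ rather than the $L^2$-average, I would first apply the De Giorgi--Nash--Moser type mean value inequality, itself available in this setting via the Neumann--Poincar\'e inequality of \cite{DWZ2018}, to convert the sup-norm bound into the $L^2$-average bound appearing in \eqref{eq:half harnack}, at the cost of slightly enlarging the concentric ball.

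To derive \eqref{eq:local Lp gradient} from \eqref{eq:half harnack}, I would raise both sides of \eqref{eq:half harnack} to the power $p/2$ and use the elementary inequality $(a + b)^{p/2} \leq C_p (a^{p/2} + b^{p/2})$ to obtain, pointwise on $B_{R/2}(x)$,
\[
|\nabla u|^p \leq C R^{-p} \left[\bigl(\Vert u \Vert^*_{L^2(B_R(x))}\bigr)^p + \bigl(\Vert \Delta u \Vert^*_{L^p(B_R(x))}\bigr)^p\right].
\]
Since $p > n \geq 2$, H\"older's inequality on the probability space $(B_R(x), \mu/\mu(B_R(x)))$ gives $\bigl(\Vert u \Vert^*_{L^2(B_R(x))}\bigr)^p \leq \bigl(\Vert u \Vert^*_{L^p(B_R(x))}\bigr)^p = \mu(B_R(x))^{-1} \Vert u \Vert^p_{L^p(B_R(x))}$. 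Integrating the resulting pointwise inequality over $B_{R/2}(x)$ and using the trivial bound $\mu(B_{R/2}(x)) \leq \mu(B_R(x))$ would then yield \eqref{eq:local Lp gradient}.

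The main technical point, and therefore the likely main obstacle, is to confirm that the precise form of the Dai--Wei--Zhang gradient estimate matches \eqref{eq:half harnack} with the correct right-hand side; otherwise a careful mean value step is needed to bridge the two formulations. One must also carefully track the dependence of the smallness threshold $\varepsilon$ on $n$, $p$, $K$ through the scaling argument of Remark~\ref{rmk:controll on k}, so that a uniform scale $R_0$ is produced and the constants $C(n,p)$, $D(n,p)$ depend only on the stated parameters.
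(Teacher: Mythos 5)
Your proposal follows essentially the same route as the paper's proof: invoke the Dai--Wei--Zhang gradient estimate (which is already stated there in the $L^2$-average form, so the hedge about a sup-norm variant and a Moser mean value step is unneeded), use the scaling behaviour from Remark~\ref{rmk:controll on k} to pass from the hypothesis at scale $1$ to smallness at a suitable scale $R_0$, then raise to the power $p/2$, apply H\"older on the normalized ball, and integrate over $B_{R/2}(x)$, concluding with $\mu(B_{R/2}(x)) \le \mu(B_R(x))$. The only detail you glossed over is that Theorem~1.9 of \cite{DWZ2018} is stated with $K=0$, so one must also pass from $k(p/2,R,K)$ to $k(p/2,R,0)$ via the elementary bound $\varrho_0 \leq \varrho_K + (n-1)K^2$, giving $k(p/2,R,0) \leq k(p/2,R,K) + (n-1)K^2 R^2$; this is absorbed into the choice of small $R_0$ and is the source of the $K$-dependence of $\varepsilon$ and $R_0$. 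Apart from that omission, which your closing caveat implicitly acknowledges, the argument is correct and matches the paper.
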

\begin{proof}
By Theorem 1.9 in \cite{DWZ2018}, there exists a constant $\varepsilon_0(n, p) > 0$ independent of $R_0$ such that if $k(p/2, R_0, 0) \leq \varepsilon_0$, then \eqref{eq:half harnack} holds for all $0< R \leq R_0$.
By Remark \ref{rmk:controll on k} we know that if $k(p/2, 1, K) \leq \varepsilon$, then $k(p/2, R, K) \lesssim R^{2-n/2p}$ as $R\to 0$ and since $\varrho_0(x) \leq \varrho_K(x) + (n-1)|K|$, we have 
\begin{equation*}
    k(p/2, R, 0) \leq k(p/2, R, K) + (n-1)|K|R^2.
\end{equation*}
Hence, if we take $R_0$ small enough, then $k(p/2, R_0, 0) \leq \varepsilon_0$, which concludes the first part of the lemma.
The constant $R_0$ depends on $K, n, \varepsilon$ and $\varepsilon_0$.

From \eqref{eq:half harnack} we have
\begin{equation*}
    \sup_{B_{R/2}(x)} |\nabla u |^{p} \leq C^{p/2} R^{-p} 2^{p/2-1}\left[(\Vert u \Vert^*_{L^{2}(B_R(x))})^{p} + (\Vert \Delta u \Vert^*_{L^{p}( B_R(x))})^{p} \right].
\end{equation*}
By H\"older's inequality
\begin{equation*}
    \left(\fint_{B_R(x)} u^2\right)^{p/2} \leq \fint_{B_R(x)} u^{p},
\end{equation*}
whence
\begin{equation*}
    \int_{B_{R/2}(x)} |\nabla u|^{p} \leq C^{p/2} R^{-p} 2^{p/2-1} \frac{\vol(B_{R/2}(x))}{\vol(B_R(x))}\left(\int_{B_R(x)}  |u|^{p} + \int_{B_R(x)} | \Delta u|^{p} \right).
\end{equation*}
To conclude the proof of \eqref{eq:local Lp gradient} recall that, as a consequence of the volume comparison, $(M,g)$ satisfies a uniform local volume doubling property. See Lemma 10 and subsequent results in \cite{BPS2020}. The proof of the lemma is complete.
\end{proof}

We are now ready to prove the global $L^p$ gradient estimate.

\begin{proof}[Proof (of Theorem \ref{thm:Lp gradient})]
We start by noting that the local $L^{p_{0}}$ gradient estimate \eqref{eq:local Lp gradient}, $p_{0}>n$, extends to the whole manifold using a uniformly locally finite covering of $M$.  The existence of such covering is a formal consequence of the local volume doubling inequality, which, as we have recalled above, holds under local integral Ricci bounds. Thus, let $u \in C^\infty_c(M)$ and $\Omega = \supp(u)$ and let $0 < R \le R_0$ small enough such that $2R \leq 1$.
Here $R_0$ is the radius appearing in Lemma \ref{lem:local L^p gradient}.
By local volume doubling, there exist $x_1, \ldots, x_h \in M$ such that 
\begin{enumerate}[label=(\roman*)]
    \item $\Omega \subseteq \bigcup_{i = 1}^h B_{R/2}(x_i)$;
    \item every $x \in \Omega$ intersects at most $N$ balls
    $B_R(x_i)$.
\end{enumerate}
Then
\begin{align*}
    \int_M |\nabla u|^{p_{0}} &\leq \sum_{i = 1}^h \int_{B_{R/2}(x_i)} |\nabla u|^{p_{0}} \leq D R^{-p}  \sum_{i = 1}^h \left(\int_{B_R(x_i)} |u|^{p_{0}} + \int_{B_R(x_i)} |\Delta u|^{p_{0}}\right) \\
    &\leq D R^{-p_{0}} \int_{M} \sum_{i = 1}^h  1_{B_R(x_i)} \left(
    |u|^{p_{0}} + |\Delta u|^{p_{0}}\right) \leq D R^{-p_{0}}N \left( \int_{M} |u|^{p_{0}} + \int_M |\Delta u|^{p_{0}}\right),
\end{align*}
which proves the gradient estimate \eqref{f: gradient estimate M} with $p=p_{0}>n$.\smallskip

Recall that if $p \in (1, 2]$, then $L^p$ gradient estimates always holds on complete Riemannian manifolds \cite{CD2003}. 
We now interpolate between this and the result for $p>n$ obtained in the first part of the proof. 

It is well known that the heat semigroup is strongly continuous and contractive on 
$L^p(M)$ for all $p \in [1, +\infty)$ \cite[Theorem IV.8]{G2016}.  By the Hille-Yosida Theorem, $-1$ is in the resolvent set of its infinitesimal generator $-\Delta$.  Then $-\Delta+\cI$ is (surjective and) invertible in $\lp{M}$.
Therefore $(-\Delta+\cI)^{-1}$ is bounded on $\lp{M}$ and its range is contained in the domain of $\Delta$. 
Now, suppose that $2<p\leq n$.  Choose $q>n$ and $\theta$ in $(0,1)$, so that $1/p = \theta/q + (1-\theta)/2$.

\smallskip
On the one hand, by the first part of the proof, the operator $\nabla(-\Delta+\cI)^{-1}$ 
extends to a bounded operator from $\laq{M}$ to $\laq{M;T_1M}$.  On the other hand 
$$
\bignormto{\mod{\nabla(-\Delta+\cI)^{-1}f}}{\ld{M}}{2}
= \big((-\Delta+\cI)^{-1}f, \Delta(-\Delta+\cI)^{-1}f\big)_{\ld{M}}.
$$
Since both $(-\Delta+\cI)^{-1}$ and $\Delta(-\Delta+\cI)^{-1}$ extend to bounded operators on $\ld{M}$, the operator 
$\nabla(-\Delta+\cI)^{-1}$ extends to a bounded operator from $\ld{M}$ to $\ld{M;T_1M}$.  

\smallskip
By the Riesz--Thorin theorem $\nabla(-\Delta+\cI)^{-1}$ extends to a bounded linear operator from $L^p(M)$ to $\lp{M;T_1M}$.
As a consequence, the $L^p$ gradient estimate holds on $M$. 

The proof of the theorem is complete.

\end{proof}

\begin{remark}\label{rmk:integral bounds}
As alluded to in the introduction, the integral curvature bounds assumed here are weaker than the classical pointwise bounds. 
An easy example of a Riemannian manifold $(M,g)$ satisfying $\inf_M \min\Ric = -\infty$ but with $k(p,1,0)$ arbitrarily small, can be constructed as follows. 
We let $M=\rr^2$ endowed with the conformally flat metric $g=e^{2\varphi}dx^2$, where $\varphi$ is a smooth nonpositive function. 
In the following the sub/superscript $e$ denotes the objects taken with respect to the Euclidean metric. 
In particular $\vol_g(K)\le \vol_e(K)$ for any measurable set $K\subset \rr^2$, and $B^g_R(w)\supseteq B^e_R(w)$ for any $R>0$ and $w\in \rr^2$. 
Suppose now that $\supp \varphi \in \cup_{n\in \mathbb{ N}}B^e_{1/2}((4n,0))$. 
This guarantees that $B_1^g(w)\subseteq B_2^e(w)$ for any $w\in\rr^2$. 
Moreover, given $w\in \rr^2$, let $n_w$ be the unique integer (if any) such that $B_{1/2}^e((4n_w,0))$ intersects $B_1^e(w)$. 
Then 
\begin{equation}
\label{eq:vol lower}
\vol_g B_1^g(w)\ge \vol_g B_1^e(w) \ge \vol_g (B_1^e(w)\setminus B_{1/2}^e(4n_w,0))=\frac 34 \pi.
\end{equation}
Fix $a\in(2-\frac 2p,2)$, and $\phi_0\in C^\infty_c(B^e_{1/2}(0,0))$, we define $\varphi(x,y) = \sum_{n\in \mathbb{ N}} \phi_n(x,y)$, where $\phi_n(x,y)=n^{-a}\phi_0(n(x-4n,y))$ if $n \ge 1$. 
On the one hand, since $\Delta_e\phi_0$ attains positive values and since $\Delta_e\phi_n(x,y)=n^{2-a}\Delta_e\phi_0(n(x-4n,y))$, we have that $\Ric_g=-2\Delta_e\varphi$ is lower unbounded. 
On the other hand, 
\begin{align*}
\int_{B_1^g(w)}((\min\Ric)_-)^p d\mu_g = 2^p\int_{B_1^g(w)}((\Delta_e\varphi)_+)^p d\mu_g &\le 2^p \int_{B_2^e(w)}((\Delta_e\phi_{n_w})_+)^p dx^2\\ &= 2^p n_w^{2p-pa-2} \int_{B_{1}^e(0,0)}((\Delta_e\phi_{0})_+)^p dx^2 \\
&\le 2^p \int_{B_{1}^e(0,0)}((\Delta_e\phi_{0})_+)^p dx^2,
\end{align*}
which is uniformly bounded independently from $w$. 
Moreover, choosing an appropriate $\phi_0$, we can assume that the right hand side of the estimate above is arbitrarily small. 
Together with the uniform volume lower bound \eqref{eq:vol lower}, this proves that $k(p,1,0)<+\infty$ and can be made arbitrarily small. 
\end{remark}


\section{Gradient estimates: global \texorpdfstring{$L^{q}$}{Lq} Ricci bounds}\label{section-GE-globalintegral}

Preliminarily, we recall the following

\begin{definition}
Let $(M,g)$ be an $n$ dimensional Riemanian manifold and let $n<q<+\infty$. The $W^{1,q}$ harmonic radius at $x$, denoted by $r_{W^{1,q}}(x)$, is the supremum of all $R>0$ such that there exists a coordinate chart $\phi : B_{R}(x) \to \rr^{n}$ satisfying
\begin{itemize}
  \item [a)] $2^{-1} [\delta_{ij}] \leq [g_{ij}] \leq 2 [\delta_{ij}]$;\smallskip
  \item [b)] $R^{1-n/q}\| \partial_{k} g_{ij} \|_{L^{q}(B_{R}(x))} \leq 1$;\smallskip
  \item [c)] $\phi$ is a harmonic map.
 \end{itemize}
\end{definition}

The following result encloses in a single statement classical contributions by Anderson and Cheeger, \cite{AC}, and a more recent contribution by Hiroshima, \cite{Hi}.

\begin{theorem}\label{th:AC-Hi}
Given $n \in \nn$, $q>n$, $K\geq 0$ and $i>0$, there exists a constant $\bar r= \bar r(n,q,K,i)>0$ such that the following holds. Let $(M,g)$ be a complete, $n$ dimensional Riemannian manifold satisfying either of the following sets of assumptions:
\begin{enumerate}[a)]
 \item $r_{\inj} \geq i$ and $\ric \geq -(n-1)K^{2}$ or
 \item $r_{\inj} \geq i$ and $\ric$ is non-negative in the global $L^{q/2}$ sense, i.e., $(\min \ric)_{-} \in L^{q/2}(M)$.
\end{enumerate}
Then $r_{W^{1,q}}(z) \geq \bar r$ independently of $z \in M$.
\end{theorem}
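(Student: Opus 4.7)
The strategy is a standard contradiction-and-rescaling scheme, originating in Anderson--Cheeger and adapted by Hiroshima to the integral setting. Assume for contradiction that the conclusion fails, so there is a sequence $(M_k,g_k,x_k)$ of complete pointed $n$-dimensional Riemannian manifolds in class (a) or (b) sharing the same constants $K$, $i$, $q$, for which $r_k:=r_{W^{1,q}}(x_k,g_k)\to 0$. Rescale to $\tg_k:=r_k^{-2}g_k$. The defining conditions (a)--(c) of the harmonic radius are scale-invariant---thanks to the normalisation $R^{1-n/q}$ in (b)---so $r_{W^{1,q}}(x_k,\tg_k)=1$. The injectivity radius satisfies $r_{\inj}(x_k,\tg_k)\ge i/r_k\to+\infty$, while the curvature hypothesis improves: in case (a) the lower Ricci bound for $\tg_k$ becomes $-(n-1)K^2r_k^2$, tending to $0$; in case (b), since the lowest $\tg_k$-eigenvalue of Ricci equals $r_k^2$ times the lowest $g_k$-eigenvalue while $d\mathrm{vol}_{\tg_k}=r_k^{-n}d\mathrm{vol}_{g_k}$,
\[
\bignorm{(\min\ric)_-}{L^{q/2}(\tg_k)}=r_k^{2-2n/q}\bignorm{(\min\ric)_-}{L^{q/2}(g_k)}\longrightarrow 0,
\]
because $q>n$ forces $2-2n/q>0$.

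Next, I would pass to the limit in the unit $W^{1,q}$-harmonic chart for $\tg_k$ centred at $x_k$. By (a)--(b), the local coefficients $(g_{ij}^k)$ are uniformly $C^0$- and $W^{1,q}$-bounded on $B_1(0)\subset\BR^n$. In harmonic coordinates one has the classical identity
\[
-\tfrac12 g^{ab}\partial_a\partial_b g_{ij}+Q_{ij}(g,\partial g)=\ric_{ij},
\]
with $Q$ a quadratic form in $\partial g$, so each $g_{ij}^k$ solves a uniformly elliptic equation whose right-hand side is uniformly bounded in $L^{q/2}$, by virtue of the rescaled Ricci control and the $L^q$ control on $\partial g$. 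Interior elliptic regularity yields uniform $W^{2,q/2}$ bounds, and Morrey's embedding (since $q>n$) upgrades these to uniform $C^{1,\alpha}$ bounds for some $\alpha>0$. Arzel\`a--Ascoli then produces a subsequence with $g_{ij}^k\to g_{ij}^\infty$ in $C^{1,\alpha'}_{\mathrm{loc}}$ and weakly in $W^{2,q/2}_{\mathrm{loc}}$; since $r_{\inj}(x_k,\tg_k)\to+\infty$, the limit extends to a complete Ricci-flat metric $g^\infty$ on $\BR^n$, which by Bishop--Gromov applied to the rescaled sequence (its Ricci lower bound degenerating to zero, volume comparison becoming an equality in the limit) must be isometric to the Euclidean one.

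Finally, the contradiction follows from the upper semicontinuity of the $W^{1,q}$-harmonic radius under the above convergence: since $g^\infty$ is Euclidean, $r_{W^{1,q}}(0,g^\infty)=+\infty$, whereas passing conditions (a)--(c) to the limit at the origin yields
\[
r_{W^{1,q}}(0,g^\infty)\le\liminf_{k\to\infty} r_{W^{1,q}}(x_k,\tg_k)=1.
\]
I expect the main obstacle to be precisely this semicontinuity step: one must verify that the harmonic charts $\phi_k$ for $\tg_k$, solutions of $\Delta_{\tg_k}\phi_k^i=0$, converge to a harmonic chart for $g^\infty$, and that condition (b) is preserved under the weak $W^{2,q/2}$ limit of the coefficients. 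This is the technical heart of the theorem, worked out by Anderson--Cheeger in case (a) and by Hiroshima in case (b), the latter replacing pointwise Ricci control by an $L^{q/2}$ one while keeping intact the elliptic regularity step above. Once this is in place, the scheme closes and produces the required uniform lower bound $\bar r=\bar r(n,q,K,i)>0$.
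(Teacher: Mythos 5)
The paper does not give a proof of this theorem: it is stated and attributed to Anderson--Cheeger \cite{AC} (pointwise case) and Hiroshima \cite{Hi} (integral case), and is used as a black box. So there is no ``paper proof'' against which to compare. Your blow-up scheme is indeed the one underlying those references, and your scaling computations (the scale invariance of the $W^{1,q}$ harmonic radius thanks to the $R^{1-n/q}$ normalisation, $r_{\inj}\to\infty$, the Ricci lower bound improving to $0$, and the $L^{q/2}$ norm of $(\min\ric)_-$ scaling by $r_k^{2-2n/q}\to 0$) are all correct.

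There are, however, two points in the middle of your sketch that would not survive scrutiny as written. First, the regularity step: from uniform $W^{1,q}$ bounds on $g_{ij}^k$ and the elliptic system in harmonic coordinates, one obtains uniform $W^{2,q/2}$ bounds, but $W^{2,q/2}\hookrightarrow C^{1,\alpha}$ requires $q/2>n$, i.e.\ $q>2n$. For the range $n<q\leq 2n$ that the theorem covers, Morrey only gives $C^{0,\alpha}$, and a bootstrap on the sequence is blocked because the right-hand side of the Ricci equation is only controlled in $L^{q/2}$. You should therefore only claim $C^{0,\alpha}_{\rm loc}$ convergence plus weak $W^{2,q/2}_{\rm loc}$ convergence of the coefficients; the limit metric is nevertheless smooth, but this comes from elliptic regularity applied to the limit equation $\tfrac12 g^{ab}_\infty\partial_a\partial_b g^\infty_{ij}=Q_{ij}(g_\infty,\partial g_\infty)$ (where the quadratic nonlinearity does bootstrap once the Ricci term has vanished), not from a uniform $C^{1,\alpha}$ bound along the sequence. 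Second, ``Ricci-flat plus Bishop--Gromov becoming equality'' is not a complete argument that the limit is Euclidean: Bishop--Gromov gives an upper volume bound with a rigidity case only when the ratio reaches $\omega_n$, and nothing in the construction forces the volume ratio to approach that value. The clean way to conclude flatness is to use that the limit is complete with $\ric_\infty\geq 0$ and $r_{\inj}=+\infty$, so every geodesic is a line, and iterated Cheeger--Gromoll splitting gives an isometry with flat $\BR^n$. Finally, as you note yourself, the (lower) semicontinuity of the harmonic radius under this convergence is the technical heart of the argument and you have not addressed it; that step requires showing that a slightly larger harmonic chart for the limit can be pulled back to the approximating metrics with controlled bounds, contradicting $r_{W^{1,q}}(x_k,\tg_k)=1$. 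This is exactly where the real work of \cite{AC} and \cite{Hi} lies.
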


We note that, by the Sobolev embedding, we have  for free a $C^{0,\alpha}$ control on the metric coefficients  within the ball $B_{\bar r /2}(z)$.

Finally, we observe the inclusions $B^{e}_{\bar r / 8} \subseteq \phi(B_{\bar r/4}(z)) \subseteq B^{e}_{\bar r /2}$, where $B^{e}\subseteq \rr^{n}$ denotes the Euclidean ball centered at the origin. Since, inside $B^{e}_{\bar r / 8}$, the Euclidean and the Riemannian measures are mutually controlled by absolute constants, in performing integrations in local coordinates, the chosen measure is irrelevant. \smallskip

\begin{remark}\label{rem:doubling}
We have already observed that complete manifolds with Ricci lower bounds, in the uniform local integral sense, enjoy the uniform local volume doubling property at any fixed scale. In the class of manifolds with positive injectivity radius, the same is true if we consider the case of global $L^{q}$ conditions. This follows from Croke isoperimetric estimate and volume comparison. In particular, at a sufficiently small scale, we have the existence of the covering with finite intersection multiplicity as in the proof of Theorem \ref{thm:Lp gradient}; see e.g. \cite[Proposition 1.5]{Hi}. Conversely, if one assumes a priori that $r_{W^{1,q}}(M):= \inf_{x\in M}r_{W^{1,q}}(x)>0$, then the double sided Euclidean control of the volume of the balls at a small scale implies the uniform volume doubling property, and hence the covering property.
\end{remark}

In view of  Remark \ref{rem:doubling} and of Theorem \ref{th:AC-Hi} we obtain that Theorem \ref{th:GE-globalintegral} is a direct consequence of the next result. Recall that, if $(x^{1},\cdots,x^{n})$ is a system of harmonic coordinates, then
\[
(\nabla u)^{j} = g^{jk}\partial_{k}u,\quad \Delta u = g^{ij}\partial^{2}_{ij} u,
\]
where $g=[g_{ij}]$ and $g^{-1}=[g^{ij}]$ are, respectively, the matrix of the metric coefficients and  its inverse.

\begin{theorem}\label{th:GE-harmradius}
Suppose that $r_{W^{1,q}}(M) = \bar r>0$ for some $q > n$. Then for every $1<p<+\infty$, the $L^{p}$ gradient estimate \eqref{f: gradient estimate M} holds on $M$.
\end{theorem}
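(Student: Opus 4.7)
The plan is to localize the $L^p$ gradient estimate in harmonic coordinates and to conclude via classical interior elliptic regularity for uniformly elliptic non-divergence form operators with continuous coefficients, summing the resulting local bounds over a uniformly locally finite covering of $M$.

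Fix $p\in(1,+\infty)$. For each $z\in M$, the assumption $r_{W^{1,q}}(M)\ge\bar r>0$ provides a harmonic chart $\phi_z:B_{\bar r}(z)\to\rr^n$ in which the Laplace--Beltrami operator reads $\Delta u=g^{ij}\partial^2_{ij}u$, with no first-order drift thanks to the harmonicity of the coordinate functions. Property (a) of the $W^{1,q}$ harmonic radius definition forces uniform ellipticity with constants independent of $z$, while property (b), combined with the Sobolev embedding $W^{1,q}\hookrightarrow C^{0,\alpha}$ (valid for $q>n$, with $\alpha=1-n/q$), yields a uniform H\"older modulus of continuity for the coefficients $g^{ij}$. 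The classical interior $L^p$ estimate for strong solutions of uniformly elliptic non-divergence equations with continuous coefficients (cf.\ Theorem 9.11 in Gilbarg--Trudinger) then produces a constant $C=C(n,p,q,\bar r)$ independent of $z$ such that, on concentric Euclidean balls $B^e_{\rho_1}\subset B^e_{\rho_2}$ contained in $\phi_z(B_{\bar r/4}(z))$,
$$\|\partial v\|_{L^p(B^e_{\rho_1})}\le C\left(\|v\|_{L^p(B^e_{\rho_2})}+\|g^{ij}\partial^2_{ij}v\|_{L^p(B^e_{\rho_2})}\right)$$
for every smooth $v$ on the chart. Using the equivalence of Euclidean and Riemannian volumes and gradient norms on the chart (again property (a)) and the inclusion $B^e_{\bar r/8}\subseteq\phi_z(B_{\bar r/4}(z))$ recalled just above the statement, this converts into the intrinsic local bound
$$\||\nabla u|\|_{L^p(B_\rho(z))}\le C'\left(\|u\|_{L^p(B_{\bar r/4}(z))}+\|\Delta u\|_{L^p(B_{\bar r/4}(z))}\right)$$
valid for every $u\in C_c^\infty(M)$, with some fixed $\rho\in(0,\bar r/4)$ and constant $C'$ both independent of $z$.

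Finally, by Remark \ref{rem:doubling}, uniform positivity of $r_{W^{1,q}}$ implies uniform local volume doubling, so I can choose a sequence $\{z_i\}\subseteq M$ such that $\{B_\rho(z_i)\}$ covers $M$ and the enlarged family $\{B_{\bar r/4}(z_i)\}$ has finite intersection multiplicity bounded by some $N=N(n,q,\bar r)$. Raising the local estimate to the $p$-th power and summing over the covering yields
$$\int_M|\nabla u|^p\le 2^{p-1}N(C')^p\int_M(|u|^p+|\Delta u|^p),$$
which is \eqref{f: gradient estimate M}. The main obstacle in this plan is obtaining genuine uniformity in $z$ of the constant in the interior $L^p$ elliptic estimate; this reduces to the standard fact that such constant depends only on the ellipticity ratio, the size of the domain and the modulus of continuity of the coefficients, all of which are uniformly controlled in our setting thanks to the $W^{1,q}$ harmonic radius assumption and the Sobolev embedding $W^{1,q}\hookrightarrow C^{0,\alpha}$.
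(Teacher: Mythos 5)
Your proposal is correct and follows essentially the same route as the paper's own proof: both transfer the estimate to the harmonic chart where $\Delta u = g^{ij}\partial^2_{ij}u$, invoke the interior $L^p$ elliptic estimate of Gilbarg--Trudinger Theorem 9.11 (with the constant made uniform in $z$ via the uniform ellipticity from property (a) and the uniform $C^{0,\alpha}$ modulus coming from property (b) and the Sobolev embedding $W^{1,q}\hookrightarrow C^{0,\alpha}$ for $q>n$), pass back to the Riemannian gradient and Laplacian via the two-sided bounds on the metric coefficients, and finally sum the local inequalities over a uniformly locally finite covering furnished by the uniform volume doubling discussed in Remark~\ref{rem:doubling}. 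You correctly isolated the one point that needs care, namely the $z$-independence of the constant in the interior estimate, and justified it in exactly the way the paper does.
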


\begin{proof}
 Fix $0<r<\bar r/16$. Since the metric coefficients in $W^{1,q}$-harmonic coordinates are uniformly $C^{0,\alpha}$-controlled, there exist absolute constants $C>1$ such that, for any $u \in C^{\infty}_{c}(M)$ and $0<R \leq r$,
 \[
C^{-1} \bigl\| |\nabla^{e} u| \bigr\|_{L^{p}(B^{e}_{R})} \leq  \bigl\| |\nabla u| \bigr\|_{L^{p}(B_{2R}(x))} \leq C \bigl\| |\nabla^{e}u| \bigr\|_{L^{p}(B^{e}_{4R})}
 \]
 and
 \[
 \| g^{ij}\partial^2_{ij}u \|_{L^{p}(B^{e}_{R})}  \leq C \| \Delta u \|_{L^{p}(B_{2R}(x))}.
 \]
On the other hand, by the Euclidean estimates of the gradient, \cite[Theorem 9.11]{GT}, there exists an absolute constant $C=C(n,p,R)>0$ such that
 \[
 C^{-1}\bigl\| |\nabla^{e}u| \bigr\|_{L^{p}(B^{e}_{2r})} \leq  \| u \|_{L^{p}(B^{e}_{4r})} +\| g^{ij}\partial^2_{ij}u \|_{L^{p}(B^{e}_{4r})}.
 \]
 Hence, 
\begin{align*}
 \bigl\| |\nabla u| \bigr\|_{L^{p}(B_{r}(x))} &\leq C \bigl\| |\nabla^{e}u| \bigr\|_{L^{p}(B^{e}_{2r})} \\
 &\leq  C \left(\| u \|_{L^{p}(B^{e}_{4r})} +\| g^{ij}\partial^2_{ij}u \|_{L^{p}(B^{e}_{4r})} \right)\\
 &\leq C\left( \| u\|_{L^{p}(B_{8r}(x))} +  \| \Delta u \|_{L^{p}(B_{8r}(x))}\right).
\end{align*}
Since, thanks to the uniform local doubling condition, $M$ has a countable covering by balls $\{B_{r}(x_{j})\}$ such that $\{ B_{8r}(x_{j})\}$ has finite intersection multiplicity, the global $L^{p}$ estimate follows by adding the local inequalities.
\end{proof}


\section{Counterexamples to \texorpdfstring{$L^{p}$}{Lp} gradient estimates}\label{s: counterex}

In this section, we prove Theorem~\ref{thm:counterex intro}.

\begin{proof}[Proof (of Theorem~\ref{thm:counterex intro})]
	First we prove the result in the case where $n=2$.
	Take $(\Sigma, g) = (\rr^2, \lambda^2 dx^2)$ where $dx^2$ is the usual Euclidean metric on 
	$\rr^2$ and $\lambda \in C^\infty(\Sigma)$ such that $0 < \lambda \leq 1$. 
	As above, we denote by $\Delta$ and $\nabla$ the Laplace--Beltrami operator and gradient with respect to 
	the metric $g$ while we use $\Delta_e$ and $\nabla^e$ to denote the corresponding Euclidean differential operators. 
	The spaces $L^p(\Sigma)$ are defined in terms of the Riemannian volume form $d\mu_g$,
	whereas $L^p(\rr^2)$ are the spaces with respect to the Lebesgue measure $dx^2$.  
	
	For each nonnegative integer $m$, consider the point $x_m$ in $\BR^2$, with coordinates $(m,0)$.
	Take $\lambda(x) = 1$ for all $x \in \Sigma \setminus \bigcup_{m \in \nn} B_{1/8}(x_m)$.
	Since $(\Sigma, g)$ is isometric to $(\rr^2, dx^2)$ outside of a countable union of bounded sets whose pairwise distance is uniformly lower bounded, 
	it is a complete Riemannian manifold.  Next, take $\varphi_0 \in C^\infty_c(\Sigma)$ such that
	\begin{equation*}
		\begin{cases}
			\varphi_0(u,v) = u+1 \text{ on } B_{1/4}(x_0) \\
			\supp (\varphi_0) \Subset B_{1/2}(x_0)
		\end{cases}
	\end{equation*}
	and let $\varphi_m(u, v) = \varphi_0(u-m, v)$, for all positive integers $m$.  Then, for every positive integer~$k$ define
	\begin{equation*}
		u_k := \sum_{m = 0}^k 2^{-m}\, \varphi_m.  
	\end{equation*} 
	Clearly $u_k \in C^\infty_c(\Sigma)$.  Notice that 
	\begin{equation*}
		\Vert u_k \Vert_{L^p(\Sigma)}^p 
		= \sum_{m = 0}^k\, 2^{-mp} \,\int_\Sigma |\varphi_m|^p\,  \lambda^2 \, dx 
		\leq  \sum_{m = 0}^k 2^{-mp} \, \Vert \varphi_m \Vert_{L^p(\BR^2)}^p 
		= \Vert \varphi_0 \Vert_{L^p(\rr^2)}^p \sum_{m = 0}^{+\infty} 2^{-mp} 
		< + \infty.
	\end{equation*}
	Now observe that $\Delta \varphi_m = \lambda^{-2} \Delta_e \varphi_m$.  Hence 
	\begin{align*}
		\Vert \Delta u_k \Vert_{L^p(\Sigma)}^p 
		= \sum_{m = 0}^k \, 2^{-mp}\, \int_\Sigma |\Delta \varphi_m|^p \, \lambda^2 \, dx 
		= \sum_{m = 0}^k \, 2^{-mp}\, \int_\Sigma |\Delta_e \varphi_m|^p \, \, \la^{2(1-p)} \,dx. 
	\end{align*}
	Moreover, we have that $\Delta_e \vp_m (u,v) = (\Delta_e \vp_0)(u-m, v)$.  Since $\Delta_e\vp_0$ 
	vanishes on $B_{1/4}(x_0)$, the function $\Delta_e \vp_m$ vanishes on $B_{1/4}(x_m)$.  
	This and the fact that the support of $\vp_0$ is contained in $B_{1/2}(x_0)$
	yield
	\begin{align*}
		\Vert \Delta u_k \Vert_{L^p(\Sigma)}^p 
		=  \int_{B_{1/2}(x_0) \setminus B_{1/4}(x_0)}  |\Delta_e \varphi_0|^p \, \la^{2(1-p)} \, dx
		=  \int_{B_{1/2}(x_0) \setminus B_{1/4}(x_0)}  |\Delta_e \varphi_0|^p \, dx, 
	\end{align*}
	where the last equality holds, because $\la=1$ on $B_{1/2}(x_0) \setminus B_{1/4}(x_0)$.  Altogether, we obtain that 
	\begin{align*}
		\Vert \Delta u_k \Vert_{L^p(\Sigma)}^p 
		&\leq \Vert\Delta_e\varphi_0 \Vert_{L^p(\rr^2)}^p\sum_{m = 0}^{+\infty} 2^{-mp} < + \infty.
	\end{align*}
	
	Now, recall that $p>2$ is given.  Choose $\be>1/(p-2)$, and consider $\lambda_\infty(x) := |x|^{2\beta}$ 
	in $B_\delta(x_0)$ for some $0 \leq \delta \ll 1/8$.  Note that $\mod{\nabla^e \vp_0} = 1$ on $B_{1/8}(x_0)$, whence 
	\begin{equation*} 
		\int_{B_\delta(x_0)} |\nabla^e \varphi_0|_e^p \, \lambda_\infty^{2-p} dx = 2\pi \int_0^\delta r^{1-2\beta(p-2)} dr = +\infty.
	\end{equation*}
	Here $|x| = r$ denotes the Euclidean distance from the origin. 
	Then for any $m \in \mathbb N$ we can find $\varepsilon_m > 0$, such that $\varepsilon_m \to 0$ as $m \to +\infty$, and 
	\begin{equation*}
		\int_{B_\delta(x_0)} |\nabla^e \varphi_0|^p \, (|x|^2 + \varepsilon_m)^{(2-p)\beta} dx \geq  2^{mp}.
	\end{equation*}
	For $x \in B_{1/8}(x_0)$ and $\varepsilon \in [0, 1]$ we define the function $\lambda_\varepsilon \in C^\infty(B_{1/8}(x_0))$ by 
	\begin{equation*}
		\begin{cases}
			0 < \lambda_\varepsilon \le 1 \\
			\lambda_\varepsilon(x) = (|x|^{2} + \varepsilon)^{\beta} \text{ if } x \in B_{\delta}(x_0) \\
			\supp(1 - \lambda_\varepsilon) \subseteq B_{1/8}(x_0).
		\end{cases}
	\end{equation*}
	Now define $\lambda \in C^\infty(\Sigma)$ by 
	\begin{equation*}
		\begin{cases}
			0 < \lambda \leq 1 \\
			\lambda(x) = 1 \text{ if } x \in \Sigma \setminus \bigcup_{m \in \nn} B_{1/8}(x_m) \\
			\lambda(x) = \lambda_{\varepsilon_m}(x-x_m) \text{ if } x \in B_{\delta}(x_m).
		\end{cases}
	\end{equation*}
	Then, arguing much as above, 
	\begin{align*}
		\bigl\Vert |\nabla u_k| \bigr\Vert_{L^p(\Sigma)}^p &= \int_\Sigma  \sum_{m = 0}^k \frac{|\nabla \varphi_m|^p}{2^{mp}} \lambda^2 dx 
		\geq \sum_{m = 0}^k 2^{-mp} \int_{B_\delta(x_0)} |\nabla \varphi_o|^p \lambda_m^2 dx \\
		&= \sum_{m = 0}^k 2^{-mp} \int_{B_\delta(x_0)} |\nabla^e \varphi_0|^p (|x|^2 + \varepsilon_m)^{(2-p)\beta} dx \geq k. 
	\end{align*}
	Since $\{\norm{u_k}{\lp{\Sigma}}\}$ and $\{\norm{\Delta u_k}{\lp{\Sigma}}\}$ are bounded, the gradient estimate fails on $\Sigma$.  
	
	This concludes the proof of Theorem~C in the case where $n=2$.
	
	\smallskip
	Suppose now that $n\geq 3$.  We proceed as in \cite{HMRV2021}. 
	Let $(\Sigma, g)$ be the Riemannian manifold considered above and $(N, h)$ any $n-2$ dimensional closed Riemannian manifold. 
	Consider the product manifold $M = \Sigma \times N$ and define
	$$
	v_k(x, y) = u_k(x)
	\quant  (x,y) \in \Sigma \times N.  
	$$ 
	Clearly $\lbrace v_k \rbrace \subseteq C^\infty_c(M)$.  It is straightforward to check that the sequences 
	$\{\norm{v_k}{\lp{M}}\}$ and $\{\norm{\Delta v_k}{\lp{M}}\}$ are bounded, whereas $\{\bignorm{\mod{\nabla v_k}}{\lp{M}}\}$ is unbounded. 
	Hence the gradient estimate fails on $M$.  
	
	This concludes the proof of Theorem \ref{thm:counterex intro}.
\end{proof}

\begin{remark}\label{rmk:optimality}
We observe that the choice of the sequence $\{x_m\}$ is quite arbitrary.
In particular, let $\alpha: [0, + \infty) \to [0, + \infty)$ be an arbitrary increasing function such that $\alpha(t) \to \infty$ as $t \to +\infty$. 
If we choose $x_m$ which diverges quick enough to infinity we can make the lower bound on Ricci arbitrarily small so that 
\begin{equation*}
    \Ric(x) \geq - \alpha\big(r(x)\big). 
\end{equation*}
This shows that the result by Cheng, Thalmaier and Thompson, \cite{CTT} is, in fact optimal with respect to pointwise lower bounds, as observed after the statement of Theorem \ref{thm:counterex intro}. 
\end{remark}
%
%

We also point out the following straightforward consequence of the proof of Theorem \ref{thm:counterex intro}.

\begin{corollary}
	\label{cor: consequence on sobolev spaces}
	For any $n \ge 2$ and $p > 2$, there exists a Riemannian manifold $M$ and a function $v_\infty \in H^{2, p}(M)$ such that $v_\infty \not\in W^{1, p}(M)$. 
\end{corollary}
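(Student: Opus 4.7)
The plan is to take $v_\infty$ to be the pointwise (and $L^p$) limit of the sequence $\{u_k\}$ constructed in the proof of Theorem~\ref{thm:counterex intro}. Concretely, set
\[
v_\infty := \sum_{m=0}^\infty 2^{-m} \varphi_m \qquad \text{on } \Sigma = \rr^2,
\]
and, when $n \ge 3$, extend trivially to $M = \Sigma \times N$ by $v_\infty(x,y):=v_\infty(x)$. The structural feature that makes the argument work is the one already exploited in the proof of Theorem~\ref{thm:counterex intro}: the bumps $\varphi_m$ have pairwise disjoint supports, contained in the well-separated balls $B_{1/2}(x_m) \subset \Sigma$, and the conformal factor $\lambda$ was chosen to match a prescribed model inside each such ball individually.

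First I would check that $v_\infty \in L^p(M)$ and that its distributional Laplacian belongs to $L^p(M)$. By disjointness of the supports of $\varphi_m$ (hence of $\Delta\varphi_m$),
\[
\|v_\infty\|_{L^p(M)}^p = \sum_{m=0}^\infty 2^{-mp}\,\|\varphi_m\|_{L^p(M)}^p, \qquad \|\Delta v_\infty\|_{L^p(M)}^p = \sum_{m=0}^\infty 2^{-mp}\,\|\Delta \varphi_m\|_{L^p(M)}^p,
\]
and both series are finite because the computations in the proof of Theorem~\ref{thm:counterex intro} already bound each summand uniformly (these are exactly the estimates showing that $\{\|u_k\|_{L^p}\}$ and $\{\|\Delta u_k\|_{L^p}\}$ are bounded). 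Since $u_k \to v_\infty$ in $L^p$ and $\Delta u_k$ converges in $L^p$, the closability of $\Delta$ as a distributional operator forces $\Delta v_\infty$ to equal this $L^p$ limit; hence $v_\infty \in H^{2,p}(M)$.

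Next I would show that $|\nabla v_\infty| \notin L^p(M)$. Again by disjointness of the supports of $\nabla \varphi_m$, the distributional gradient of $v_\infty$ is represented pointwise by the sum $\sum 2^{-m}\nabla\varphi_m$, and
\[
\bigl\||\nabla v_\infty|\bigr\|_{L^p(M)}^p = \sum_{m=0}^\infty 2^{-mp}\,\bigl\||\nabla \varphi_m|\bigr\|_{L^p(M)}^p.
\]
The parameters $\varepsilon_m$ in the definition of $\lambda$ were tuned precisely so that the $m$-th summand is at least $1$: this is the very inequality that yielded $\bigl\||\nabla u_k|\bigr\|_{L^p}^p \ge k$ in the proof of Theorem~\ref{thm:counterex intro}. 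Consequently the series diverges, so $v_\infty \notin W^{1,p}(M)$, proving the corollary.

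I do not expect a genuine obstacle. The only point requiring a little care is the disjoint-support bookkeeping needed to commute the series with the $L^p$ norms and with the distributional Laplacian and gradient; because the bumps $\varphi_m$ are supported in mutually disjoint Euclidean balls, this bookkeeping is essentially automatic.
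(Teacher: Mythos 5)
Your proposal is correct and essentially coincides with the paper's own (very terse) argument: define $u_\infty = \sum_{m\ge 0} 2^{-m}\varphi_m$ on $\Sigma$, read off from the computations in the proof of Theorem~\ref{thm:counterex intro} that $u_\infty$ and $\Delta u_\infty$ lie in $L^p(\Sigma)$ while $|\nabla u_\infty|$ does not, and for $n\ge 3$ pass to the product $\Sigma\times N$. You have merely spelled out the disjoint-support bookkeeping (noting in particular that $v_\infty$ is locally a finite sum, hence smooth, so the series commutes with the gradient and Laplacian) that the paper leaves implicit, and your formulation sidesteps the paper's somewhat confusing ``$n>p$'' versus ``$2<p\le n$'' phrasing, which is immaterial since the product construction supplies any dimension $n\ge 3$ regardless of the relation between $p$ and $n$.
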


Indeed, for $n>p$, it is enough to define $$u_\infty = \sum_{m = 0}^{+ \infty} 2^{-m}\varphi_m;$$ then $u_\infty, \Delta u_\infty \in L^p(\Sigma)$ while $|\nabla u_\infty| \not\in L^p(\Sigma)$. In particular $u_\infty \in H^{2, p}(\Sigma)$ while $u_\infty \not \in W^{1, p}(\Sigma)$. 
The case $2< p\le n$ can be dealt with the same trick as in the proof of Theorem \ref{thm:counterex intro}.


\section{Shifted Calder\'on--Zygmund inequalities}\label{s: CZ}

We begin this section by proving the equivalence between boundedness of the local Riesz transform and Calder\'on--Zygmund inequalities stated in Proposition \ref{prop:equiv}.

\begin{proof}[Proof(of Proposition \ref{prop:equiv}).]

Since $-\Delta$ generates a contraction semigroup on $\lp{M}$, the operator $-\Delta$ is sectorial in $\lp{M}$,
and the resolvent operator $(-\Delta+\tau \cI)^{-1}$ is bounded on $L^p$, by the Hille--Yosida Theorem.  
Hence so is $(-\Delta+\tau \cI)^{-k}$.  Set $\ds\psi(\la) := (\la^k+\tau)(\la+\tau)^{-k}$.
It is not hard to prove that both $\psi$ and $1/\psi$ are in the extended Dunford class $\cE_{\theta}$ for every $\theta$ in $(\pi/2,\pi)$.
By the standard functional calculus for sectorial operators, $\psi\big((-\Delta)\big)$ and $(1/\psi)\big((-\Delta)\big)$
extend to bounded operators on $\lp{M}$.

\smallskip
Suppose first that $\R_\tau^{2k}$ is bounded on $\lp{M}$, i.e. there exists a constant $C$ such that 
$$
\bignorm{\mod{\R_\tau^{2k}f}}{\lp{M}}
\leq C \bignorm{f}{\lp{M}} 
\quant f \in \lp{M}.
$$
In particular, if $u$ is in $\Dom_{L^p}\big((-\Delta)^k\big)$, then the function $f := (-\Delta+\tau \cI)^{k} u$ is in $\lp{M}$, and
$$
\bignorm{\mod{\nabla^{2k}u}}{\lp{M}}
\leq C  \bignorm{(-\Delta+\tau \cI)^{k} u}{\lp{M}} 
\leq C \, \big[\bignorm{\Delta^{k}u}{\lp{M}} +\bignorm{u}{\lp{M}} \big],
$$
where $C$ depends on $\tau$.  
The last inequality 
is a straightforward consequence of the boundedness in $L^p$ of $(1/\psi)\big((-\Delta)\big)$.
%

\smallskip
Conversely, suppose that \eqref{f: CZ higher order} holds.  Consider $f$ in $\lp{M}$.  Since $\tau$ is in the resolvent set of $(-\Delta)$,
the operator $(-\Delta+\tau \cI)^{k}$ maps $\Dom_{L^p}(\Delta^k)$ onto $\lp{M}$.  Therefore there exists $u$ in $\Dom_{L^p}\big(\Delta^k\big)$ such that $u = (-\Delta+\tau \cI)^{-k} f$.  Consequently \eqref{f: CZ higher order}, with $u$ as above yields
\begin{equation} \label{f: CZ M II}
	\bignorm{\mod{\nabla^{2k}u}}{\lp{M}}
	\leq C\, \big[\bignorm{(-\Delta+\tau \cI)^{-k} f}{p}+ \bignorm{\Delta^k (-\Delta+\tau \cI)^{-k} f}{p}\big].  
\end{equation}
Now, both $(-\Delta+\tau \cI)^{-k}$ and $\Delta^k (-\Delta+\tau \cI)^{-k}$ are bounded operators on $\lp{M}$, as $\psi\big((-\Delta)\big)$ is.  Furthermore,
standarad properties of sectorial operators imply that there exists a constant $C$ such that 
$$
\bigopnorm{(-\Delta+\tau \cI)^{-k}}{\lp{M}} 
\leq \Bigopnormto{(-\Delta+\tau \cI)^{-1}}{\lp{M}}{k} 
\leq \frac{C}{\tau^k}
\quant \tau>0
$$
and 
$$
\bigopnorm{\Delta^k (-\Delta+\tau \cI)^{-k}}{\lp{M}} 
\leq \Bigopnormto{\Delta(-\Delta+\tau \cI)^{-1}}{\lp{M}}{k} 
\leq C
\quant \tau>0.
$$
This and \eqref{f: CZ M II} yield
$$
\bignorm{\mod{\nabla^{2k}(-\Delta+\tau \cI)^{-k} f}}{\lp{M}}
\leq C\, \big[ \tau^{-k}\,  \bignorm{f}{p}+ \bignorm{f}{p}\big]
\leq C\, \max\big(1, \tau^{-k}\big) \bignorm{f}{p}, 
$$
as required.

%

\smallskip

Suppose now that $k=2$, and that \eqref{f: CZ M} holds for all $u\in C^\infty_c(M)$. Let $f\in \Dom_{L^p}(\Delta)=H^{2,p}(M)$. 
Thanks to a density result by O. Milatovic \cite[Appendix A]{GPM2019} we can take a sequence $u_j\in C^\infty_c(M)$ converging to $f$ in $H^{2,p}(M)$ as $j\to\infty$ (see Remark \ref{rmk:extension to H2p}). Hence, \eqref{f: CZ M} and \eqref{f: gradient estimate M} (which holds due to \cite[Theorem 2]{GPM2019}) implies that $u_j$ is a Cauchy sequence in $W^{2,p}(M)$, hence it converges to some limit $u_\infty\in W^{2,p}(M)$. Finally, $u_\infty=f$ and \eqref{f: CZ M}, as $W^{2,p}(M)$ continuously embeds in $H^{2,p}(M)$.
	\end{proof}

The rest of this section is devoted to prove Theorem \ref{th:CZ}. As in Section~\ref{section-GE-globalintegral} we use local estimates in $W^{1,q}$-harmomic coordinates and then glue them together thanks to the uniform local volume doubling condition.

The crucial ingredient is the following estimate of the first order term in the local expression of the Hessian of a smooth function. Recall that, if $(x^{1},\cdots,x^{n})$ is a system of harmonic coordinates, then
\[
\nabla^2_{ij}u=\Hess(u)_{ij} = \partial^{2}_{ij} u - \Gamma^{k}_{ij} \partial_{k} u
\]
where $\Gamma_{ij}^{k}$ denote the Christoffel symbols.

\begin{lemma}\label{lemma:CZ}
Let $1<p<+\infty$. Fix $z\in M$, $q> \max(n,p)$ and let $0< r = \frac{1}{4}r_{W^{1,q}}(z)$. Finally, denote by $\Gamma_{ij}^{k}$ the Christoffel symbols with respect to the $W^{1,q}$ harmonic coordinates system $\phi(x) = (x^{1},\cdots,x^{n}) : B_{ r}(z) \to U \supseteq B^{e}_{r/2}$. Then, there exists a constant $C = C(n,p,q,r)>0$ such that, for any $u \in C^{\infty}(M)$,
 \[
C^{-1} \cdot \| \Gamma^{k}_{ij} \partial_{k} u \|_{L^{p}(B^{e}_{r/2})} \leq \bigl\| |\Hess^{e} u| 	\bigr\|_{L^{p}(B^{e}_{r/2})} +	\bigl\| |\nabla u| \bigr\|_{L^{p}(B_{r}(z))}.
 \]
\end{lemma}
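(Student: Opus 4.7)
The plan is to combine Hölder's inequality with the $W^{1,q}$-harmonic coordinate control on the metric (which translates into an $L^q$-bound on the Christoffel symbols) and a standard Sobolev embedding on the Euclidean ball. First I observe that in the chart one has $\Gamma^{k}_{ij}=\tfrac{1}{2}g^{kl}\bigl(\partial_i g_{jl}+\partial_j g_{il}-\partial_l g_{ij}\bigr)$, so the harmonic chart conditions $2^{-1}[\delta_{ij}]\le [g_{ij}]\le 2[\delta_{ij}]$ and $R^{1-n/q}\|\partial g\|_{L^q(B_{R}(z))}\le 1$ (with $R=4r$) yield a uniform bound $\|\Gamma\|_{L^q(B^{e}_{r/2})}\le C(n,q,r)$.

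Next I apply Hölder's inequality with exponents $q$ and $s:=pq/(q-p)$, which satisfy $1/p=1/q+1/s$, and absorb the constant:
\[
\bigl\|\Gamma^{k}_{ij}\partial_k u\bigr\|_{L^p(B^{e}_{r/2})}
\le \|\Gamma\|_{L^q(B^{e}_{r/2})}\,\|\partial u\|_{L^s(B^{e}_{r/2})}
\le C\,\|\partial u\|_{L^s(B^{e}_{r/2})}.
\]
I then invoke the Sobolev embedding $W^{1,p}(B^{e}_{r/2})\hookrightarrow L^{s}(B^{e}_{r/2})$ on the Euclidean ball, applied componentwise to $\partial u$. The hypothesis $q>\max(n,p)$ enters precisely here: if $p<n$, a direct calculation gives $s<np/(n-p)$ (equivalent to $q>n$), and if $p\ge n$ then any finite $s$ is automatically subcritical. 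Thus
\[
\|\partial u\|_{L^s(B^{e}_{r/2})}\le C\bigl(\|\partial u\|_{L^p(B^{e}_{r/2})}+\|\partial^2 u\|_{L^p(B^{e}_{r/2})}\bigr).
\]

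To finish, $|\partial^2 u|=|\Hess^{e}u|$ pointwise, so the second term is exactly $\bigl\||\Hess^{e}u|\bigr\|_{L^p(B^{e}_{r/2})}$. For the first, the bilipschitz equivalence $g\asymp g_{e}$ implies both $|\partial u|_{e}\asymp|\nabla u|_{g}$ pointwise in the chart and the comparability of the Euclidean and Riemannian volume forms; together with the inclusion $B^{e}_{r/2}\subseteq \phi(B_r(z))$ (which follows from $r=\tfrac14 r_{W^{1,q}}(z)$ and the inclusions recalled before Lemma~\ref{lemma:CZ}), this gives $\|\partial u\|_{L^p(B^{e}_{r/2})}\le C\bigl\||\nabla u|\bigr\|_{L^p(B_r(z))}$. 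Chaining these inequalities yields the stated estimate.

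The only delicate point is the verification that the Hölder conjugate $s=pq/(q-p)$ falls at or below the Sobolev critical exponent, which is exactly what $q>\max(n,p)$ is engineered to guarantee. The remaining ingredients—pointwise comparability of metric and Euclidean norms, comparability of volume forms, and the $L^q$-bound on $\Gamma$—are immediate consequences of the harmonic chart hypotheses, with all constants depending only on $n,p,q,r$ and not on the point $z$ or the function $u$.
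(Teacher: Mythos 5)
Your proof is correct and follows essentially the same route as the paper: Hölder with exponents $(q, pq/(q-p))$, the $L^q$ bound on $\Gamma$ coming from $\Gamma=\tfrac12 g^{-1}\partial g$, Sobolev embedding to absorb $\|\partial u\|_{L^s}$ into $\|\partial^2 u\|_{L^p}+\|\partial u\|_{L^p}$, and then metric comparability to pass to the Riemannian gradient. The only difference is presentational: the paper splits the Sobolev step into three explicit cases ($p<n$, $p=n$, $p>n$, using Sobolev--Kondrakov, Sobolev plus Hölder, and Morrey plus Hölder respectively), whereas you invoke the unified statement that $W^{1,p}(B^e_{r/2})\hookrightarrow L^s(B^e_{r/2})$ for the subcritical $s=pq/(q-p)$, with the observation (correctly verified) that $q>\max(n,p)$ is exactly what makes $s$ subcritical in all regimes. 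This is a valid and slightly cleaner packaging of the same idea.
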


\begin{proof}
We apply H\"older's inequality with conjugate exponents $t=q/(q-p)$ and $ t' =  q/p$ to get
\begin{equation}\label{lemmaCZ:1}
\| \Gamma^{k}_{ij} \partial_{k} u \|_{L^{p}(B^{e}_{r/2})} \leq  \sum_{k} \| \Gamma_{ij}^{k}  \|_{L^{q}(B_{r}(z))} \cdot
\bigl\| |\nabla^{e} u| 	\bigr\|_{L^{pq/(q-p)}(B^{e}_{r/2})},\qquad\forall\,i,j=1,\dots,n.
\end{equation}
Next, we recall that the Christoffel symbols display a $C^{1}$ dependence on the metric coefficients in the form
\[
\Gamma = \frac{1}{2} g^{-1} \cdot \partial g.
\]
Since $\| g \|_{L^{\infty}}$, $\| g^{-1} \|_{L^{\infty}}$ and $\| \partial g \|_{L^{q}}$ are bounded inside $B_{ r}(z)$ (with a bound depending only on $n, q, r$), we deduce that there exists a constant $C = C(n,q, r)>0$ such that
\begin{equation}\label{lemmaCZ:2}
\| \Gamma^{k}_{ij} \|_{L^{q}(B_{ r }(z))} \leq C.
\end{equation}
It remains to take care of gradient term in \eqref{lemmaCZ:1}. To this end, for the sake of clarity, we distinguish three cases according to the values of $p$.\smallskip

\noindent ({$\mathbf{1< p < n}$}). Since
\[
\frac{pq}{q-p} < p^{\ast}:= \frac{n  p}{n-  p},
\]
we can apply directly the Sobolev(--Kondrakov) embedding theorem and deduce that, for some constant $S=S(r,p,q,n)>0$,
\[
S^{-1} \cdot \bigl\| |\nabla^{e} u| \bigr\|_{L^{pq/(q-p)}(B^{e}_{r/2})} \leq \bigl\| |\Hess^{e} u|\bigr\|_{L^{ p}(B^{e}_{r/2})} + \bigl\| |\nabla^{e} u| \bigr\|_{L^{ p}(B^{e}_{r/2})}.
\]
On the other hand, observe that
\[
\bigl\| |\nabla^{e} u| \bigr\|_{L^{p}(B^{e}_{r/2})} \leq C \bigl\| |\nabla u| \bigr\|_{L^{p}(B_{r}(z))}
\]
for some absolute constant $C>0$, whence
\begin{equation}\label{lemmaCZ:3}
 \bigl\| |\nabla^{e} u| \bigr\|_{L^{\frac{pq}{q-p}}(B^{e}_{r/2})} \leq C \left( \bigl\| |\Hess^{e} u|\bigr\|_{L^{p}(B^{e}_{r/2})}+  \bigl\| |\nabla u| \bigr\|_{L^{p}(B_{r}(z))} \right).
\end{equation}
Inserting \eqref{lemmaCZ:2} and \eqref{lemmaCZ:3} into \eqref{lemmaCZ:1}, gives the desired inequality when $1<p<n$.\smallskip

\noindent ($\mathbf{p = n}$). Let  $1<\tilde p <n=p$ be defined by
\[
\tilde p = \frac{nq}{2q-n}.
\]
Since
\[
\frac{nq}{q-n} = \frac{n \tilde p}{n-\tilde p}=: \tilde{p}^{\ast},
\]
we can apply the Sobolev embedding theorem and  the H\"older inequality to deduce that, for some constant $S=S(r,q,n)>0$,
\begin{align*}
S^{-1} \cdot \bigl\| |\nabla^{e} u| \bigr\|_{L^{nq/(q-n)}(B^{e}_{r/2})} &\leq \bigl\| |\Hess^{e} u|\bigr\|_{L^{\tilde p}(B^{e}_{r/2})} + \bigl\| |\nabla^{e} u| \bigr\|_{L^{\tilde p}(B^{e}_{r/2})}\\
&\leq |B^{e}_{r/2}|^{(n-\tilde p)/n \tilde p} \left( \bigl\| |\Hess^{e} u|\bigr\|_{L^{ n}(B^{e}_{r/2})} + \bigl\| |\nabla^{e} u| \bigr\|_{L^{ n}(B^{e}_{r/2})}\right).
\end{align*}
The conclusion follows exactly as above. 
\smallskip

\noindent ($\mathbf{p > n}$). In this case, we can use Morrey's and H\"older's inequalities to deduce that, for some constant $S = S(r,p,q,n)>0$,
\begin{align*}
\bigl\| |\nabla^{e} u| \bigr\|_{L^{pq/(q-p)}(B^{e}_{r/2})} &\leq |B^{e}_{r/2}|^{(q-p)/pq} \cdot \bigl\| |\nabla^{e} u| \bigr\|_{L^{\infty}(B^{e}_{r/2})}\\
&\leq S |B^{e}_{r/2}|^{(q-p)/qp} \left( \bigl \| |\Hess^{e} u|\bigr\|_{L^{ p}(B^{e}_{r/2})} + \bigl\| |\nabla^{e} u |\bigr\|_{L^{ p}(B^{e}_{r/2})}\right).
\end{align*}
The proof of the lemma is complete.
\end{proof}

We are now in the position to prove the following abstract result, which combined with Proposition \ref{prop:equiv}, proves Theorem \ref{th:CZ}.

\begin{theorem}
\label{th: CZ under W1q harmonic control}
Let $1<p<+\infty$. Suppose that $r_{W^{1,q}}(M)>0$ for some $q>\max(n,p)$. Then the $L^{p}$ Calder\'on--Zygmund estimate \eqref{f: CZ M} holds on $M$.
\end{theorem}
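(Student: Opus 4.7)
The plan is to localize in $W^{1,q}$-harmonic coordinates, apply a standard Euclidean second-order elliptic estimate to the local expression of the Laplacian, and then paste the local inequalities together via a uniformly locally finite cover. In a harmonic chart $\phi\colon B_r(z)\to\rr^n$ with $r\sim r_{W^{1,q}}(z)$, harmonicity of the coordinates is equivalent to $g^{ij}\Gamma^k_{ij}=0$, and consequently $\Delta u$ reduces to the pure second-order operator $L u = g^{ij}\partial^2_{ij} u$ with no first-order terms. Since $q>n$, Morrey's embedding applied to the bound (b) in the definition of $r_{W^{1,q}}$ gives $g^{ij}\in C^{0,\alpha}(B^e_{r/2})$ with $\alpha=1-n/q$, so $L$ is uniformly elliptic with Hölder continuous principal part, with constants depending only on $n$, $q$ and the fixed scale $r$.

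First I would invoke the classical interior $W^{2,p}$ estimate for such operators (in the style of \cite[Theorem~9.11]{GT}) on a slightly smaller Euclidean ball, obtaining
\[
\bigl\|\Hess^e u\bigr\|_{L^p(B^e_{r/4})} \leq C\bigl(\|u\|_{L^p(B^e_{r/2})}+\|\Delta u\|_{L^p(B^e_{r/2})}\bigr).
\]
Next I would pass from the Euclidean to the Riemannian Hessian through the identity $\nabla^2_{ij}u=\partial^2_{ij}u-\Gamma^k_{ij}\partial_k u$ and Lemma~\ref{lemma:CZ}, which dominates the Christoffel term by $\|\Hess^e u\|_{L^p}+\bigl\||\nabla u|\bigr\|_{L^p}$. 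Combining these two ingredients yields, at each point $z$, a local Calderón--Zygmund bound
\[
\bigl\||\nabla^2 u|\bigr\|_{L^p(B_{r/4}(z))} \leq C\bigl(\|u\|_{L^p(B_{r}(z))}+\|\Delta u\|_{L^p(B_{r}(z))}+\bigl\||\nabla u|\bigr\|_{L^p(B_{r}(z))}\bigr),
\]
where $C$ is independent of $z$ because every harmonic chart satisfies the same normalised bounds.

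To globalise, I would exploit Remark~\ref{rem:doubling}: the hypothesis $r_{W^{1,q}}(M)>0$ implies uniform local volume doubling, hence the existence of a countable cover by balls $B_{r/4}(z_j)$ for which the enlargements $B_{r}(z_j)$ have uniformly bounded intersection multiplicity. Adding the local estimates over this cover produces
\[
\bigl\||\nabla^2 u|\bigr\|_{L^p(M)} \leq C\bigl(\|u\|_{L^p(M)}+\|\Delta u\|_{L^p(M)}+\bigl\||\nabla u|\bigr\|_{L^p(M)}\bigr)
\qquad \forall\, u\in C^\infty_c(M).
\]
Finally, Theorem~\ref{th:GE-harmradius} supplies the $L^p$ gradient estimate \eqref{f: gradient estimate M} under the same assumption $r_{W^{1,q}}(M)>0$, so the first-order term on the right can be absorbed into the remaining two, giving \eqref{f: CZ M}.

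The main technical step is ensuring that the constants in the local Euclidean second-order estimate are uniform in the base point $z$; this is the reason for requesting $q>n$ in the definition of the $W^{1,q}$ harmonic radius, since it is exactly the Hölder control on $g^{ij}$ arising from Morrey's embedding that lets one apply the interior Schauder--Calderón--Zygmund theory with scale-invariant constants. Once that is in place, the passage from the flat Hessian to $\nabla^2 u$ through Lemma~\ref{lemma:CZ}, the patching via the doubling cover, and the absorption of $|\nabla u|$ using Theorem~\ref{th:GE-harmradius} are essentially routine.
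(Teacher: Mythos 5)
Your proposal follows exactly the paper's proof: localize in $W^{1,q}$-harmonic coordinates, apply \cite[Theorem~9.11]{GT} to the Euclidean Hessian, control the Christoffel term via Lemma~\ref{lemma:CZ}, glue the local estimates using the finite-multiplicity cover from Remark~\ref{rem:doubling}, and absorb the first-order term with Theorem~\ref{th:GE-harmradius}. The argument is correct and essentially identical in structure and ingredients to the one in the paper.
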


\begin{proof}
Set $\bar r = r_{W^{1,q}}(M)/4$ and let $u \in C^{\infty}_{c}(M)$. We preliminarily observe that there exists a uniform constant $C>0$ such that, for any $z \in M$,
\[
\bigl\| |\nabla^{e} u |\bigr\|_{L^{p}(B^{e}_{\bar r})} \leq C \bigl\| |\nabla u |\bigr\|_{L^{p}(B_{2 \bar r}(z))}, \qquad \| g^{ij}\partial^2_{ij}u \|_{L^{p}(B^{e}_{\bar r})} \leq C \| \Delta u \|_{L^{p}(B_{2\bar r}(z))}.
\]
Using the Euclidean Calder\'on--Zygmund estimate \cite[Theorem 9.11]{GT} joint with Lemma \ref{lemma:CZ}, we find a constant $C=C(n,p,\bar r)>0$ such that, for any $z \in M$,
\begin{align*}
\bigl\| |\Hess(u)| \bigr\|_{L^{p}(B_{\bar r /4}(z))} &\leq \bigl\| |\Hess^{e} u| \bigr\|_{L^{p}(B^{e}_{\bar r /2})}  + \sum_{ij}\| \Gamma^{k}_{ij} \partial_{k} u \|_{L^{p}(B^{e}_{\bar r/2 })} \\
 &\leq C \left( \| g^{ij}\partial^2_{ij}u \|_{L^{p}(B^{e}_{\bar r})} + \| u \|_{L^{p}(B^{e}_{\bar r})} + \bigl\| |\nabla u |\bigr\|_{L^{p}(B_{2\bar r}(z))}\right)\\
 &\leq C\left( \| \Delta u \|_{L^{p}(B_{2\bar r}(z))} + \| u \|_{L^{p}(B_{2 \bar r}(z))} + \bigl\| |\nabla u |\bigr\|_{L^{p}(B_{2\bar r}(z))}\right).
\end{align*}
Now, according to Remark \ref{rem:doubling}, we cover $M$ by a sequence of balls $\{ B_{\bar r /4}(z_{j})\}_{j \in \nn}$ with the property that the covering $\{ B_{2\bar r }(z_{j})\}_{j \in \nn}$ has finite intersection multiplicity. Summing up the local inequalities and using monotone and dominated convergence we deduce the existence of a constant $C= C(n,p,K,i)>0$ such that
\[
C^{-1} \bigl\| |\Hess(u)| 	\bigr\|_{L^{p}} \leq \| \Delta u \|_{L^{p}} + \| u \|_{L^{p}} + \bigl\| |\nabla u| \bigr\|_{L^{p}}.
\]
To conclude we apply the $L^{p}$ gradient estimates of Theorem \ref{th:GE-harmradius}. Accordingly, there exits a constant $C=C(n,p,K)>0$ such that 
\[
C^{-1 }\bigl\| |\nabla u| \bigr\|_{L^{p}} \leq \| u \|_{L^{p}} + \| \Delta u \|_{L^{p}}
\]
and this completes the proof.
\end{proof}


\section{Strong \texorpdfstring{$W^{2,p}$}{W2p}-estimates}\label{s: strong W2p}

In this section we show how to pass from a Calder\'on--Zygmund inequality to a strong $W^{2,p}$-estimate. To this end, we need to learn how to absorb the $L^{p}$ norm of a function and its gradient using the positivity of the bottom of the spectrum of the Laplacian.

\begin{definition}
 Let $(M,g)$ be a complete Riemannian manifold with $\vol(M) = +\infty$. The Cheeger constant of $M$ is defined as
 \[
 h(M) = \inf_{\Omega \Subset M, \, \partial \Omega \in C^{\infty}} \dfrac{|\Omega|}{|\partial \Omega|}.
 \]
\end{definition}

It is well known from works by Cheeger and Buser, \cite{Ch,Bu} (see also \cite{Le,DM}) that, if $(M,g)$ is a complete $n$ dimensional manifold with $\ric \geq -(n-1)K^{2}$, then the following facts are equivalent:
\begin{enumerate}[a)]
 \item (Cheeger constant) $h(M)>0$;
 \item (Spectral gap) The bottom of the spectrum $b$ of $-\Delta$ in $L^2(M)$ is strictly positive, i.e. $b>0$;
 \item ($L^{p}$ Poincar\'e) For every $1 \leq p<+\infty$, there exists a constant $C=C(n,p,K)>0$ such that
 \[
C^{-1} \| u \|^{p}_{L^{p}} \leq \bigl\| |\nabla u| \bigr\|^{p}_{L^{p}},\, \forall u\in C^{\infty}_{c}(M).
 \] 
\end{enumerate}

The main tool of this section is the following simple

\begin{lemma}\label{lemma:Lp-estimate-u}
 Let $1<p \leq 2$. Suppose that $\ric \geq -(n-1)K^{2}$ and that $M$ has spectral gap $b>0$. Then, there exists a constant $C=C(n,p,K,b)>0$ such that, for any $u \in C^{\infty}_{c}(M)$ it holds
 \[
 C^{-1}\| u \|_{L^{p}}  \leq \| \Delta u \|_{L^{p}}.
 \]
\end{lemma}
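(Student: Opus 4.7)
The plan is to combine the $L^{p}$ Poincar\'e inequality, which under the pointwise Ricci lower bound is equivalent to the spectral gap assumption, with a self-contained multiplicative $L^{p}$ gradient estimate that holds on any complete manifold when $1<p\leq 2$ purely through integration by parts. In this way we avoid invoking the additive estimate \eqref{f: gradient estimate M} (which would only yield $\|u\|_{L^p}\leq C(\|u\|_{L^p}+\|\Delta u\|_{L^p})$ and not close) and use the spectral gap exactly to absorb the lower order term.

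First, I would appeal to the equivalence (a)$\Leftrightarrow$(b)$\Leftrightarrow$(c) recalled just before the lemma to obtain a constant $C_{0}=C_{0}(n,p,K,b)>0$ such that
\[
\|u\|_{L^{p}}\leq C_{0}\,\bigl\||\nabla u|\bigr\|_{L^{p}}\qquad \forall\,u\in C^{\infty}_{c}(M).
\]

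Next, I would produce a multiplicative control on $\bigl\||\nabla u|\bigr\|_{L^{p}}$ in terms of $\|u\|_{L^{p}}$ and $\|\Delta u\|_{L^{p}}$. Testing $-\Delta u$ against the regularized function $v_{\epsilon}=(u^{2}+\epsilon)^{(p-2)/2}u\in C^{\infty}_{c}(M)$ and integrating by parts produces the identity
\[
\int_{M}(u^{2}+\epsilon)^{(p-4)/2}\bigl[(p-1)u^{2}+\epsilon\bigr]\,|\nabla u|^{2}\,d\mu=\int_{M}v_{\epsilon}\,(-\Delta u)\,d\mu.
\]
Passing to the limit $\epsilon\downarrow 0$ by dominated convergence (using the uniform pointwise bound $|v_{\epsilon}|\leq (u^{2}+\epsilon)^{(p-1)/2}$ and the fact that $v_{\epsilon}$ is compactly supported in $\supp u$), combined with H\"older's inequality on the right, yields
\[
(p-1)\int_{M}|u|^{p-2}|\nabla u|^{2}\,d\mu\leq \|u\|_{L^{p}}^{p-1}\|\Delta u\|_{L^{p}}.
\]
Then a second application of H\"older's inequality with conjugate exponents $2/p$ and $2/(2-p)$, valid precisely because $1<p\leq 2$, gives
\[
\bigl\||\nabla u|\bigr\|_{L^{p}}^{p}=\int_{M}\bigl(|\nabla u|^{2}|u|^{p-2}\bigr)^{p/2}|u|^{p(2-p)/2}\,d\mu\leq \Bigl(\int_{M}|u|^{p-2}|\nabla u|^{2}\,d\mu\Bigr)^{p/2}\|u\|_{L^{p}}^{p(2-p)/2},
\]
and combining the two displays collapses to the multiplicative gradient estimate
\[
\bigl\||\nabla u|\bigr\|_{L^{p}}^{2}\leq \frac{1}{p-1}\,\|u\|_{L^{p}}\|\Delta u\|_{L^{p}}.
\]

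Finally, inserting this estimate into the square of the Poincar\'e inequality of the first step and cancelling one factor of $\|u\|_{L^{p}}$ produces
\[
\|u\|_{L^{p}}\leq \frac{C_{0}^{2}}{p-1}\,\|\Delta u\|_{L^{p}},
\]
which is precisely the claim, with $C=C(n,p,K,b)$ as required. The only delicate point is justifying the integration by parts with $v_{\epsilon}$ and the passage to the limit $\epsilon\downarrow 0$ when $1<p<2$, where $|u|^{p-2}u$ is only $C^{p-1}$ at zeros of $u$; however, this is a standard regularization argument, the required uniform bounds following immediately from $|v_{\epsilon}|\leq (u^{2}+\epsilon)^{(p-1)/2}$ and $\nabla v_{\epsilon}=(u^{2}+\epsilon)^{(p-4)/2}[(p-1)u^{2}+\epsilon]\nabla u$.
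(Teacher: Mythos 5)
Your proof is correct and follows essentially the same strategy as the paper: combine the $L^{p}$ Poincar\'e inequality (from the Cheeger--Buser equivalence under the Ricci lower bound) with the multiplicative $L^{p}$ gradient estimate $\||\nabla u|\|_{L^{p}}^{2}\leq C\|u\|_{L^{p}}\|\Delta u\|_{L^{p}}$ valid for $1<p\leq 2$. Where the paper simply cites \cite{CD2003} (with a pointer to \cite{HMRV2021} for a direct proof) for this multiplicative estimate, you reprove it from scratch via integration by parts against the regularized test function $v_{\epsilon}$, and your final absorption step (square the Poincar\'e inequality, insert the multiplicative bound, cancel a factor of $\|u\|_{L^p}$) is algebraically cleaner than the paper's Young-inequality absorption, though equivalent. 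One small technical point in your derivation: dominated convergence handles the right-hand side $\int v_{\epsilon}(-\Delta u)$, but for the left-hand side the integrand $(u^{2}+\epsilon)^{(p-4)/2}[(p-1)u^{2}+\epsilon]\,|\nabla u|^{2}$ is not uniformly dominated near the zero set of $u$ when $p<2$ (there it behaves like $\epsilon^{(p-2)/2}|\nabla u|^{2}$); you should instead drop the $\epsilon|\nabla u|^{2}$ piece, apply Fatou to the remaining nonnegative integrand $(p-1)(u^{2}+\epsilon)^{(p-4)/2}u^{2}|\nabla u|^{2}$, and note that $\nabla u=0$ a.e.\ on $\{u=0\}$ so no mass is lost. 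With that adjustment the argument is complete.
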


\begin{proof}
Since $b>0$, the $L^{p}$ Poincar\'e inequality tells us that
\[
C^{-1}  \| u \|_{L^{p}} \leq \bigl\| |\nabla u| \bigr\|_{L^{p}}.
\]
Now, recall from \cite{CD2003} (see also \cite{HMRV2021} for a direct proof) that, since $1<p\leq 2$, we have the validity of the multiplicative $L^{p}$ gradient estimate
\begin{align}\label{eq:CD}
C^{-1}  \bigl\| |\nabla u |\bigr\|_{L^{p}} &\leq \| u \|_{L^{p}}^{1/2} \| \Delta u \|_{L^{p}}^{1/2}\\
&\leq \varepsilon  \| u \|_{L^{p}} + \varepsilon ^{-1} \| \Delta u \|_{L^{p}},\nonumber
\end{align}
where $0<\varepsilon  \ll 1$ is arbitrary. When inserted into the Poincar\'e inequality, this latter gives
\[
C^{-1}  \| u \|_{L^{p}} \leq \| \Delta u \|_{L^{p}}
\]
where, this time, $C>0$ depends also on $\varepsilon $. This completes the proof.
\end{proof}

With this preparation we are in a position to give the

\begin{proof}[Proof (of Theorem \ref{th:CZ'})]
Since $\ric \geq -(n-1)K^{2}$ and $1<p \leq 2$, by \cite{CCT}, there exists a constant $C>0$ such that, for every $u \in C^{\infty}_{c}(M)$,
 \[
 C^{-1} \bigl\| |\Hess(u)| \bigr\|_{L^{p}} \leq \| u \|_{L^{p}} + \| \Delta u \|_{L^{p}}.
 \]
 On the other hand, the $L^{p}$ gradient estimates state that, for a suitable constant $C>0$,
 \[
 C^{-1} \bigl\| |\nabla u |	\bigr\|_{L^{p}} \leq \| u \|_{L^{p}} + \| \Delta u \|_{L^{p}}.
\]
Summarising
\[
C^{-1} \| u \|_{W^{2,p}} \leq \| u \|_{L^{p}} + \| \Delta u \|_{L^{p}}.
\]
An application of Lemma \ref{lemma:Lp-estimate-u} yields the desired strong $W^{2,p}$-estimate.
\end{proof}

\begin{remark}
Note that the assumption $p\le 2$ has been used only in the multiplicative gradient estimate \eqref{eq:CD}. While the first line of \eqref{eq:CD} is known to be false for $p>2$ (see \cite{CD2003}), we do not know if an inequality of the form $\| |\nabla u| \|_{L^{p}} \leq \varepsilon  \| u \|_{L^{p}} + C(\varepsilon ) \| \Delta u \|_{L^{p}}$ could hold when $p>2$ in the class of complete manifolds with $\ric \geq -(n-1)K^{2}$.
\end{remark}


\section{Higher order Calder\'on--Zygmund inequalities}\label{s: ho}

We start by recalling the following consequence of \cite[Theorem~5.2]{MMV}, proved by the second named author joint with Mauceri and Vallarino.

\begin{theorem} \label{t: MMV}
Suppose that $M$ has bounded geometry at the order $2\ell -2 \in \nn$, namely, 
\[
| \nabla^{j} \ric | \leq K,\, \forall j=0,\cdots,2\ell-2 \quad \text{and}\quad r_{\inj}(M)\geq i,
\]
for some constants $K\geq 0$ and $i>0$. Assume also that
$M$ has spectral gap $b>0$. Then, for any $1<p \leq 2$ there exists a constant $C=C(n,p,\ell,K,b,i)>0$ such that the global Riesz transform $\R^{2\ell}$ of order $2\ell$ is bounded from 
$\lp{M}$ to $\lp{M;T_{2\ell}M}$\end{theorem}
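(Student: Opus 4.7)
\textbf{Proof strategy for Theorem \ref{t: MMV}.} Since the spectral gap $b>0$ makes $-\Delta$ strictly positive on $L^2(M)$, the operator $(-\Delta)^{-\ell}$ is well defined through the functional calculus and admits the subordination representation
\begin{equation*}
(-\Delta)^{-\ell} = \frac{1}{\Gamma(\ell)} \int_0^\infty t^{\ell-1} e^{t\Delta}\,dt,
\end{equation*}
so that $\R^{2\ell}$ is an integral operator whose kernel is
\begin{equation*}
K(x,y) = \frac{1}{\Gamma(\ell)} \int_0^\infty t^{\ell-1}\, \nabla_x^{2\ell} p_t(x,y)\,dt.
\end{equation*}
The plan is to establish the $L^2$ bound by hand via iterated Bochner identities, and then to upgrade it to $1<p<2$ by verifying that $K$ is a Calder\'on--Zygmund kernel on the space of homogeneous type that $M$ becomes under bounded geometry.

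For the $L^2$ step I would iterate the Bochner--Weitzenb\"ock formula to commute $\nabla^{2\ell}$ past $\Delta^\ell$. The resulting commutators produce terms which are multilinear in $\nabla^{j}\ric$ for $j\le 2\ell-2$ (this is precisely the order of derivatives of $\ric$ assumed to be bounded) and in $\nabla^{k}u$ for $k<2\ell$. Integration by parts, together with the bounded geometry assumption, yields an estimate of the shape
\begin{equation*}
\bignorm{|\nabla^{2\ell} u|}{2}^{2} \le C\sum_{j=0}^{\ell} \bignorm{\Delta^{j} u}{2}^{2},
\end{equation*}
and the spectral gap allows one to absorb every lower order term via $\bignorm{u}{2}\le b^{-1}\bignorm{\Delta u}{2}$ applied iteratively, leaving $\bignorm{|\nabla^{2\ell} u|}{2}\le C\bignorm{\Delta^{\ell} u}{2}$. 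Setting $u=(-\Delta)^{-\ell} f$ gives the $L^2$ boundedness of $\R^{2\ell}$.

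For the $L^p$ extension with $1<p<2$ the key input is a Gaussian-type bound on the covariant derivatives of the heat kernel, of the form
\begin{equation*}
|\nabla_x^{k} p_t(x,y)| \le \frac{C}{t^{(n+k)/2}}\exp\!\left(-\frac{d(x,y)^2}{Ct}\right)e^{-bt/2},\qquad t>0,\ k\le 2\ell.
\end{equation*}
The short-time Gaussian factor is obtained from the bounded geometry assumption via parabolic Schauder estimates in uniform harmonic coordinates, whose existence is guaranteed by the positive injectivity radius together with the bounds on $\ric$; the long-time exponential factor $e^{-bt/2}$ comes from the $L^2$ spectral gap combined, through interpolation, with the ultracontractive bound afforded by bounded geometry. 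Inserting the bound into the formula for $K$ and integrating in $t$ yields both pointwise decay of $K$ and a H\"ormander-type integral regularity condition on $K$.

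Once $K$ is an $L^2$-bounded Calder\'on--Zygmund kernel, a standard singular integral argument on the space of homogeneous type given by $M$ (local volume doubling is free from bounded geometry, while the spectral gap compensates for any global non-doubling at infinity, as systematically exploited in \cite{MMV}) produces weak type $(1,1)$ bounds for $\R^{2\ell}$; Marcinkiewicz interpolation with the $L^2$ bound then gives boundedness on $L^p$ for every $1<p\le 2$. The main technical obstacle is that one needs control on $2\ell$ covariant derivatives of $p_t$ while only $2\ell-2$ derivatives of $\ric$ are assumed bounded. The two missing orders must be supplied by the parabolic smoothing through $\partial_t\sim\Delta$, which has to be threaded carefully through the iterated Bochner commutators to express $\nabla^{2\ell}_x p_t$ in terms of quantities controllable from the stated hypotheses.
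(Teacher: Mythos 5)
The paper does not actually prove this statement: Theorem~\ref{t: MMV} is simply recalled as a consequence of Theorem~5.2 of \cite{MMV}, where the stronger result that $\R^{2\ell}$ maps a suitable Hardy type space $X^\ell(M)$ boundedly into $L^1(M)$ is established; $L^p$ boundedness for $1<p\le 2$ then follows by interpolation against the $L^2$ bound. Your proposal, by contrast, is a self-contained attempt.

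There is a genuine gap in the self-contained route, and it occurs exactly at the step you describe most vaguely. Under the hypotheses of the theorem (lower Ricci bound, positive injectivity radius, spectral gap), $M$ behaves at large scales like hyperbolic space: the volume of geodesic balls grows exponentially in the radius, so $M$ is \emph{not} globally doubling and is therefore \emph{not} a space of homogeneous type in the sense of Coifman--Weiss. The sentence ``a standard singular integral argument on the space of homogeneous type given by $M$ $\ldots$ produces weak type $(1,1)$ bounds'' is thus the crux of the matter rather than a routine step: the Calder\'on--Zygmund decomposition used to run that argument requires global volume doubling, and it breaks down here. The parenthetical remark that ``the spectral gap compensates for any global non-doubling at infinity, as systematically exploited in \cite{MMV}'' is not accurate as stated; what \cite{MMV} actually does is give up on weak $(1,1)$ entirely and build an alternative endpoint theory. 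Concretely, they construct Hardy type spaces $X^k(M)$ by an atomic decomposition adapted to the local structure of $M$ (using only balls of small radius, where local doubling from bounded geometry holds) and to the operator $-\Delta$ (via a functional-calculus characterisation, not a maximal-function one); then they show $\R^{2\ell}:X^\ell(M)\to L^1(M)$ and interpolate against $L^2$ using the fact that $X^\ell(M)$ interpolates with $L^2(M)$ to give $L^p(M)$ for $1<p<2$. None of this reduces to the classical weak $(1,1)$ argument.

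Secondary issues: your proposed Gaussian bound $|\nabla_x^k p_t(x,y)|\lesssim t^{-(n+k)/2}\exp(-d(x,y)^2/(Ct))e^{-bt/2}$ conflates two regimes that must be treated separately. The small-time Gaussian bound with the $t^{-(n+k)/2}$ prefactor is plausible under bounded geometry at order $2\ell-2$ (which, via Anderson--Cheeger, gives uniform $C^{2\ell-1,\alpha}$ harmonic coordinates and hence parabolic regularity of the requisite order), but the long-time decay $e^{-bt/2}$ must be obtained from the $L^2\to L^2$ norm of $e^{t\Delta}$ via $L^1\to L^2$ and $L^2\to L^\infty$ off-diagonal estimates, and the resulting bound does not have the algebraic prefactor $t^{-(n+k)/2}$ for large $t$ (on the hyperbolic plane, for instance, the prefactor is $t^{-3/2}$ rather than $t^{-1}$). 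This does not by itself sink the argument, since one splits the $t$-integral at $t=1$, but the way you wrote the estimate is not what one gets and masks the local/global dichotomy. The $L^2$ step via iterated Bochner identities and absorption of lower-order terms through the spectral gap is essentially sound and is also the natural way to secure the $L^2$ bound.

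To summarise: your proposal takes a genuinely different route from the paper (which merely cites \cite{MMV}), but the route as written does not close. The missing idea is precisely the Hardy space endpoint theory of \cite{MMV}, which substitutes for weak $(1,1)$ because $M$ is not doubling.
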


Actually, the result in \cite{MMV} is stronger as it establishes that the global covariant Riesz transform
$\R^{2\ell}$
is bounded as an operator from a certain Hardy space to $L^{1}$. Its $L^{p}$ boundedness for $1<p \leq 2$ then follows from an interpolation argument.


It is natural to speculate whether some of the assumptions in Theorem~\ref{t: MMV} can be removed.  Our contribution
is to allow $b$ to be zero, at the expense of considering local Riesz transforms versus the global version thereof.  This is the content of Theorem \ref{th:CZ-2k}) that we are now going to prove.

\begin{proof}[Proof (of Theorem \ref{th:CZ-2k})]
All over this proof, we denote by $\cL:=-\Delta$ the positively	 defined Laplace--Beltrami operator of the underlying manifold.
Suppose that $(M,g)$ has bounded geometry at the order $2\ell-2$. Take the standard hyperbolic plane $\hh^{2}$, and consider the Riemannian product $(M \times \hh^{2},g+g_{\hh^{2}})$. Then, denoting by $b_M=b$, $b_{\hh^2}$ and $b_{M\times\hh^2}$ the bottom of the $L^2$ spectrum of the (positive) Laplace--Beltrami operator on $M$, $\hh^2$ and $M\times \hh^2$ respectively, it holds
 \[
 b_{M \times \hh^{2}}= b_{M} + b_{\hh^{2}} \geq b_{\hh^{2}} = \frac{1}{4}.
 \]
Moreover
 \[
 | \nabla^{j} \ric_{N} | \leq \max(1,K),\, j=0,\cdots,2\ell-2,
 \]
and also
\[
r_{\inj}(M\times\hh^2) \geq r_{\inj}(M) \geq i.
\]
It follows from Theorem~\ref{t: MMV} and Proposition~\ref{prop:equiv} that, if $p$ is in $(1,2)$, there exists a constant $C>0$ such that 
	\begin{equation} \label{f: equiv I}
\bignorm{|\nabla_{M\times {\hh^2}|}^{2 \ell}w}{\lp{M\times {\hh^2}}}
\leq  C \bignorm{\cL_{M\times {\hh^2}}^{\ell}w}{\lp{M\times {\hh^2}}},
\, \forall w \in \Dom_{L^p}(\cL_{M\times {\hh^2}}).  
	\end{equation}

We apply this estimate to functions
$w$ of the form $\vp\otimes \psi$, where $\vp\in\Dom_{L^p}(\cL_M)$ and $\psi$ belongs to  $C^{\infty}_{c}({\hh^2})$.    
Since
$$
\cL_{M\times {\hh^2}}^\ell (\vp\otimes \psi)
= \sum_{j=0}^\ell \binom{\ell}{j} \,  \big(\cL_M^j \vp\big)\otimes (\cL_{\hh^2}^{\ell-j}\psi) 
$$
and 
$$
\bigmod{\nabla^{2\ell}_{M\times {\hh^2}} (\vp\otimes \psi)}_{M\times {\hh^2}}^2
= \sum_{j=0}^{2\ell} {2\binom{\ell}{j}} \bigmod{\big(\nabla^{j}_M \vp\big)\otimes\big(\nabla^{2\ell-j}_{\hh^2} \psi\big)}_{M \times {\hh^2}}^2, 
$$
by \eqref{f: equiv I} we see that 
$$
\begin{aligned}
	 \bignorm{|\nabla^{2\ell}_{M} \vp|}{\lp{M}} \bignorm{\psi}{\lp{{\hh^2}}}
	& = \bignorm{|(\nabla_M^{2\ell} \vp)\otimes \psi|}{\lp{M\times {\hh^2}}} \\
	& \leq \bignorm{|\nabla_{M\times {\hh^2}}^{2\ell} (\vp\otimes \psi)|}{\lp{M\times {\hh^2}}} 
	     \\
	& \leq C \bignorm{\cL_{M\times {\hh^2}}^\ell (\vp\otimes \psi)}{\lp{M\times {\hh^2}}} \\
	& \leq C \sum_{j=0}^\ell {\binom{\ell}{j}} \,  \bignorm{\cL_M^j \vp}{\lp{M}} \bignorm{\cL_{\hh^2}^{\ell-j}\psi}{\lp{{\hh^2}}} 
\end{aligned}
$$
Now, suppose that $\psi$ does not vanish identically on ${\hh^2}$.  Then divide both sides of the previous inequality by $\bignorm{\psi}{\lp{{\hh^2}}}$,
and obtain that 
$$
\bignorm{|\nabla_M^{2\ell} \vp|}{\lp{M}} 
\leq C \, \si_{p,\ell}\, \sum_{j=0}^\ell {\binom{\ell}{j}} \bignorm{\cL_M^j \vp}{\lp{M}} 
\quant \vp \in \lp{M}.  
$$
where
$$
\si_{p,l} 
:= \min_{0\leq j\leq l}\, \inf_{\psi\neq 0}\, \frac{\bignorm{\cL_{\hh^2}^{l-j} \psi}{\lp{{\hh^2}}}}{\bignorm{\psi}{\lp{{\hh^2}}}},
$$
is a finite constant. Now, since $\cL$ is sectorial on $\lp{M}$ (for $\cL_M$ generates the contraction semigroup $\{\cH_t\}$ on $\lp{M}$), the Moment inequality \cite[Theorem~6.6.4]{Haa} implies that 
$$
\bignorm{\cL_M^j \vp}{\lp{M}} 
\leq C \bignormto{\vp}{\lp{M}}{1-j/\ell} \bignormto{\cL_M^\ell \vp}{\lp{M}}{j/\ell}, 
$$
so that 
$$
\begin{aligned}
\sum_{j=0}^\ell \binom{\ell}{j} \bignorm{\cL_M^j \vp}{\lp{M}} 
	& \leq C \,  \big(\bignormto{\vp}{\lp{M}}{1/l} + \bignormto{\cL_M^\ell \vp}{\lp{M}}{1/\ell}\big)^\ell \\
	& \leq C \, 2^\ell  \big(\bignorm{\vp}{\lp{M}}+ \bignorm{\cL_M^\ell \vp}{\lp{M}}\big).
\end{aligned}
$$
By combining the steps above, we find that there exists a constant $C>0$ such that 
\begin{equation} \label{f: est norm intermediate}
\bignorm{|\nabla_M^{2\ell} \vp|}{\lp{M}} 
\leq C \, \big(\bignorm{\vp}{\lp{M}}+ \bignorm{\cL_M^\ell \vp}{\lp{M}}\big).
\end{equation}
A further application of Proposition \ref{prop:equiv} concludes the proof.
%
\end{proof}

\begin{remark} \label{rem: odd order}
\begin{enumerate}
\item It is natural to speculate whether the Riesz transforms of higher odd order $\R_\tau^{2\ell-1}$ are bounded on $\lp{M}$
when $\ell \geq 2$.
\item It should be possible to give an alternative proof to Theorem \ref{th:CZ-2k} using $C^{2\ell-1,\alpha}$ harmonic coordinates, which exist in our assumptions, see \cite{AC}. Such a proof would likely work also in the case $p>2$, but it would be very technical and involved, due to the large number of terms of the coordinate expression of $\nabla^{2\ell}$ to deal with; compare for instance with the analogous result for the higher order density problem in \cite{IRV}. For the sake of simplicity we decided not to investigate such an approach in this paper.
\end{enumerate}   
\end{remark}


\end{document}